\documentclass[12pt]{article}

\usepackage{amsfonts}
\usepackage{graphics}
\usepackage{amssymb}

\newtheorem{theorem}{Theorem}[section]
\newtheorem{lemma}{Lemma}[section]
\newtheorem{definition}{Definition}[section]
\newtheorem{proposition}[theorem]{Proposition}
\newtheorem{corollary}[theorem]{Corollary}
\newtheorem{example}{Example}[section]
\newtheorem{remark}{Remark}[section]




\linespread{1.3}

\begin{document}

\begin{center}
{\Large Tempered Generalized Functions and Hermite Expansions\\}

\end{center}

\vspace{0.3cm}

\begin{center}
{\large Pedro Catuogno\footnote{Research partially supported by CNPQ 302704/2008-6.} and
Christian Olivera\footnote{Research partially supported by FAPESP 02/10246-2, 2007/54740-4.}}\\

\textit{Departamento de
 Matem\'{a}tica, Universidade Estadual de Campinas, \\ F. 54(19) 3521-5921 – Fax 54(19)
3521-6094\\ 13.081-970 -
 Campinas - SP, Brazil. e-mail: pedrojc@ime.unicamp.br ; colivera@ime.unicamp.br}
\end{center}
\vspace{0.3cm}
\begin{center}
\begin{abstract}
\noindent In this work we introduce a new algebra of tempered
generalized functions. The tempered distributions are embedded in
this algebra via their Hermite expansions. The Fourier transform is
naturally extended to this algebra in such a way that the usual
relations involving multiplication, convolution and differentiation
are valid. We study the elementary properties of the association,
embedding, point values and Fourier transform. Furthermore, we give
a generalized It\^o formula in this context and some applications to
stochastic analysis.
\end{abstract}
\end{center}

\noindent {\bf Key words:} Hermite functions, Generalized function
algebras, Fourier transform, It\^o formula.

\vspace{0.3cm} \noindent {\bf MSC2000 subject classification:}
 46F30, 60H.

\section {Introduction}

\noindent The differential algebras of generalized functions of
Colombeau type were developed in connection with non linear
problems. These algebras provide a good framework to solve
differential equations with rough initial data or discontinuous
coefficients (see \cite{Biag}, \cite{gkos} and \cite{ober2}).
Recently there has been a great interest in developing a stochastic
calculus in algebras of generalized functions (see \cite{albe},
\cite{capa}, \cite{mar1}, \cite{mi-se}, \cite{ober4} and
\cite{Russo} and references).

\noindent  In order to develop a Fourier transform for generalized
functions, J. F. Colombeau in \cite{colb} introduced an algebra of
tempered generalized functions with the characteristic that most of
the properties involving the Fourier transform and convolution are
valid only in a weak sense. For a recent account of the theory and
applications we refer to  the reader to \cite{Del} and \cite{gkos}.

\noindent The following question naturally arises: is it possible to
construct an algebra of generalized functions in which the
operations of multiplication, convolution, differentiation and
Fourier transform are defined and have all the properties of the
ordinary operations?

\noindent In this work we introduced a new algebra of tempered
generalized functions that answers the above question. The
construction of this algebra is based on the Fourier-Hermite
expansion of tempered distributions. More precisely, the Hermite
representation theorem for $\mathcal{S}^{\prime}$ (see \cite{Miku},
\cite{Red1}, \cite{Schw}, and \cite{Red3}) establishes that every
$S\in\mathcal{S}^{\prime}$ can be represented by a Hermite series
\begin{equation}\label{repre}
S=\sum_{j=0}^{\infty} S(h_j)h_j
\end{equation}
where $\{h_j\}$ are the Hermite functions and the equality is in the
weak sense. The idea is to embed the tempered distributions into a
differential subalgebra $\mathcal{H}_{\mathbf{s}^{\prime}} \subset
\mathcal{S}^{\mathbb{N}_0}$ via the sequence of partial sums
\[
S_{n}=\sum_{j=0}^{n} S(h_j)h_j
\]
and define the algebra of tempered generalized functions as
\[
\mathcal{H}=\mathcal{H}_{\mathbf{s}^{\prime}}/\mathcal{H}_{\mathbf{s}}
\]
where $\mathcal{H}_{\mathbf{s}}$ is a differentiable ideal of
$\mathcal{H}_{\mathbf{s}^{\prime}}$ (see Section 3 for precise
definitions and details).

\noindent The operations in $\mathcal{H}$ are naturally introduced
in a pointwise way. By example the Fourier transform of $[f_n] \in
\mathcal{H}$ is defined to be $[\mathcal{F}(f_n)]$, where
$\mathcal{F}$ is the usual Fourier transform. Now, it is not
difficult to prove that the Fourier transform is an isomorphism
and that usual identities involving convolution, derivatives and
Fourier transform are satisfied.

\noindent Another important point of this paper is that we obtain an
It\^o formula under weak regularity conditions. We make use of
F\"{o}llmer's formulation of pathwise stochastic calculus \cite{Fol}
in order to establish an It\^o formula for tempered  generalized
functions. This formula extends the classical one via association
and the embedding of the tempered distributions into $\mathcal{H}$.

\noindent The paper is organized as follows: Section 2 contains
a brief summary without proofs of Hermite functions and the
Hermite representation theorems. In section 3, we introduce the
tempered algebra $\mathcal{H}$, this algebra contains the
tempered distributions and extends the product in $\mathcal{S}$.
Moreover, we study some elementary properties and we show that the
symmetric Hermite product of tempered distributions (see
\cite{calo} and \cite{Shen-Sun}) is associated with the product in
$\mathcal{H}$.

\noindent In section 4, in order to define the point value of
tempered generalized functions, we introduce the ring of tempered
numbers $\mathbf{h}$. Section 5 we deal with integration,
convolutions and the Fourier transform of tempered generalized
functions. We obtain the Fourier inversion theorem for
$\mathcal{H}$, the formula of interchange between the product and
the convolution and the rule of integration by parts. We remark that
the identities are in $\mathbf{h}$, this is in a strong sense.

\noindent In section 6 we introduce an elementary stochastic
calculus for generalized tempered functions. We derive an It\^o
formula for elements of $\mathcal{H}$. We observe that the
classical It\^o formula is associated to the generalized It\^o
formula, under suitable regularity conditions. Moreover, we obtain
a probabilistic representation of the solution of the heat
equation with data in $\mathcal{H}$. Finally we note that in
\cite{mar2} and \cite{mar1} is present an It\^o formula for
generalized functions containing some mistakes in definitions and
proofs; see \cite{capa}.

\section{N-Representation of tempered distributions.}
Let $\mathcal{S} \equiv \mathcal{S}(\mathbb{R})$ be the Schwartz
space of rapidly decreasing smooth real valued functions.

For each $m\in \mathbb{N}_0 \equiv \mathbb{N}\cup\{0\}$, we
consider $\|\cdot\|_{m}$ the norm of $\mathcal{S}$ given by
\[
\|\varphi\|_{m}=\Big(\int^{\infty}_{-\infty}|
(N+1)^m\varphi(x)\mid^2dx \Big)^{\frac{1}{2}},
\]
where $N+1=\frac{1}{2}(-\frac{d^2}{dx^2}+x^2+1)$.

We observe that $\mathcal{S}$ is provided with the natural topology
given by these norms is a sequentially complete locally convex
space and its dual space $\mathcal{S}^{\prime}$ is the space of
tempered distributions. The family of norms $\{ \|\cdot\|_{m}: m
\in \mathbb{N}_0\}$ is direct and equivalent to the family of
seminorms $\{ \|\cdot\|_{\alpha, \beta, \infty}: \alpha, \beta \in
\mathbb{N}_0\}$, given by
\[
\| \varphi \|_{\alpha, \beta, \infty}=\sup_x |(1+
|x|^{2})^{\alpha} D^{\beta} \varphi(x)|.
\]
We use often the following property of the multiplication on
$\mathcal{S}$. For all $m \in \mathbb{N}_0$ there exists $r, s \in
\mathbb{N}_0$ and a constant $C_m>0$ such that
\begin{equation}\label{eq1}
\|\varphi \psi \|_m \leq C_m \| \varphi \|_r \| \psi \|_s
\end{equation}
for all $\varphi, \psi \in \mathcal{S}$ (see for instance
\cite{radyno}, Theorem 2).

The {\it{Hermite polynomials}} $H_n(x)$ are defined by

\begin{equation}\label{forpolHe}
    H_n(x)=(-1)^n e^{\frac{x^2}{2}} \frac{d^n}{dx^n}  e^{-\frac{x^2}{2}}
\end{equation}
for $n\in \mathbb{N}_{0}$ or equivalently
\begin{equation}\label{forpolHp}
      H_n(x) =2^{-\frac{n}{2}}\sum_{k=0}^{[n/2]}\frac{(-1)^k
      n!( \sqrt{2}x)^{n-2k}}{k!(n-2k)!}.
\end{equation}
The {\it{Hermite functions}} $h_n(x)$ are defined by
\begin{equation}\label{relfunpolh}
    h_{n}(x)=  (\sqrt{2\pi}n!)^{-\frac{1}{2}}
    e^{{-\frac{1}{4}x^2}}H_n(x)
\end{equation}
for $n\in \mathbb{N}_{0}$. Some properties of the Hermite
functions that we will often use are:
\begin{itemize}
\item $h_{n}\in \mathcal{S}$ for all $n \in \mathbb{N}_{0}$,

\item $h_n$ is an even (odd) function if $n$ is even (odd),

\item $\sqrt{n+1}h_{n+1}(x)+2h_n^{\prime}(x)=\sqrt{n} h_{n-1}(x)$
for all $n \in \mathbb{N}_{0}$,

\item $\sqrt{n+1}h_{n+1}(x)=xh_n(x)-\sqrt{n} h_{n-1}(x)$ for all
$n \in \mathbb{N}_{0}$,

\item $\{h_{n}: n \in \mathbb{N}_{0} \}$ is an orthonormal basis
of $L^{2}(\mathbb{R})$,

\item $(N+1)h_{n}=(n+1)h_{n}$ for all $n \in \mathbb{N}_{0}$.
\end{itemize}

\noindent  From  the two last properties   we have

\[
\|\varphi\|^2_{m}=\sum_{n=0}^{\infty}(n+1)^{2m}<\varphi,h_n>^2,
\]
where $<\varphi,h_n>=\int \varphi(x)h_n(x)dx$ are the
Fourier-Hermite coefficients of the expansion of $\varphi$.

\noindent The Hermite representation theorem for $\mathcal{S}$
($\mathcal{S}^{\prime}$) states a topological isomorphism from
$\mathcal{S}$ ($\mathcal{S}^{\prime}$) onto the space of sequences
$\mathbf{s}$ ($\mathbf{s}^{\prime}$).

\noindent Let $\mathbf{s}$ be the space of rapidly decreasing
sequences
\[
\mathbf{s}=\{(a_n)\in \ell^2:\sum_{n=0}^{\infty}(n+1)^{2m}\mid
a_n\mid^2<\infty, \; \mbox{for all } \; m \in \mathbb{N}_0\}.
\]
The space $\mathbf{s}$ is a locally convex space with the sequence
of norms
\[
\| (a_n)\|_m = (\sum_{n=0}^{\infty}(n+1)^{2m}\mid
a_n\mid^2)^{\frac{1}{2}}
\]
or with the equivalent sequence of norms
\[
\mid(a_{n})\mid_{m,\infty} =  \sup_{n} (n+1)^{m}| a_n|.
\]
The topological dual space to $\mathbf{s}$, denoted by
$\mathbf{s}^{\prime}$, is given by
\[
\mathbf{s}^{\prime}=\{(b_n): \mbox{for some
}\;(C,m)\in\mathbb{R}\times \mathbb{N}_0, \; \mid b_n \mid \leq
C(n+1)^m \mbox{ for all } n \}.
\]
The natural pairing of elements from $\mathbf{s}$ and
$\mathbf{s}^{\prime}$, denoted by $\langle \cdot, \cdot \rangle$,
is given by
\[
\langle (b_n), (a_n) \rangle = \sum_{n=0}^{\infty}b_na_n
\]
for $(b_n) \in \mathbf{s}^{\prime}$ and $(a_n) \in \mathbf{s}$.

It is clear that $\mathbf{s}^{\prime}$ is an algebra with the
pointwise operations:
\begin{eqnarray*}
(b_n)+(b^{\prime}_n)& = & (b_n+b^{\prime}_n) \\
(b_n)\cdot(b^{\prime}_n)& = & (b_nb^{\prime}_n),
\end{eqnarray*}
and $\mathbf{s}$ is an ideal of $\mathbf{s}^{\prime}$.

The relation between $\mathbf{s}$ ($\mathbf{s}^{\prime}$) and
$\mathcal{S}$ ($\mathcal{S}^{\prime}$) is induced by the Hermite
functions, via Hermite coefficients (evaluation). The following
representation theorem is fundamental in our work, for the proof
see \cite{Red1} pp. 143.

\begin{theorem}[N-representation theorem for $\mathcal{S}$ and $\mathcal{S}^{\prime}$]\label{rpreS}
$\mathbf{a)}$ Let $\mathbf{h}:\mathcal{S}\rightarrow \mathbf{s}$
be the application
\[
\mathbf{h}(\varphi)=(<\varphi,h_n>).
\]
Then $\mathbf{h}$ is a topological isomorphism. Moreover,
\[
\|\mathbf{h}(\varphi)\|_m=\|\varphi\|_m
\]
for all $\varphi \in \mathcal{S}$.

\noindent $\mathbf{b)}$ Let
$\mathbf{H}:\mathcal{S}^{\prime}\rightarrow \mathbf{s}^{\prime}$
be the application $\mathbf{H}(T)=(T(h_n))$. Then $\mathbf{H}$ is
a topological isomorphism. Moreover, if $T \in
\mathcal{S}^{\prime}$ we have that
\[
T=\sum _{n=0}^{\infty} T(h_n)h_n
\]
in the weak sense and for all $\varphi \in \mathcal{S}$,
\[
T(\varphi)=\langle \mathbf{H}(T),\mathbf{h}(\varphi)\rangle.
\]
\end{theorem}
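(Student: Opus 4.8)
The plan is to handle part (a) first and then obtain part (b) essentially by transposition. Everything rests on the orthonormal expansion identity recorded just before the statement, namely
\[
\|\varphi\|_m^2 = \sum_{n=0}^\infty (n+1)^{2m}\langle \varphi, h_n\rangle^2,
\]
which is itself a direct consequence of $(N+1)h_n=(n+1)h_n$ together with $\{h_n\}$ being an orthonormal basis of $L^2(\mathbb{R})$. This identity says precisely that $\|\mathbf{h}(\varphi)\|_m=\|\varphi\|_m$ for every $m$, so the asserted norm equality is immediate and, in particular, $\mathbf{h}(\varphi)\in\mathbf{s}$ whenever $\varphi\in\mathcal{S}$; the map is thus well defined and isometric for each seminorm.

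It remains, for (a), to show that $\mathbf{h}$ is a bijection. Injectivity follows because a vanishing coefficient sequence forces $\varphi=0$ in $L^2$ by completeness of the Hermite basis. For surjectivity I would start from $(a_n)\in\mathbf{s}$ and set $\varphi_N=\sum_{n=0}^N a_n h_n$; the rapid decay of $(a_n)$ makes $(\varphi_N)$ a Cauchy sequence in every norm $\|\cdot\|_m$, so by sequential completeness of $\mathcal{S}$ it converges to some $\varphi\in\mathcal{S}$ with $\langle\varphi,h_n\rangle=a_n$. Since $\mathbf{h}$ is a continuous linear isometry in every norm with a continuous inverse, it is a topological isomorphism. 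This same convergence argument yields the by-product that the partial sums $\sum_{n\le N}\langle\varphi,h_n\rangle h_n$ converge to $\varphi$ in $\mathcal{S}$, so finite combinations of Hermite functions are dense in $\mathcal{S}$; I will reuse this below.

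For part (b), I would realize $\mathbf{H}$ as the transpose of $\mathbf{h}^{-1}$. First, $\mathbf{H}(T)\in\mathbf{s}'$: continuity of $T$ gives $|T(\varphi)|\le C\|\varphi\|_m$ for some $C,m$, and since $\|h_n\|_m=(n+1)^m$ one obtains $|T(h_n)|\le C(n+1)^m$, the defining bound for $\mathbf{s}'$. Injectivity of $\mathbf{H}$ follows from the density of the Hermite span established above. For surjectivity, given $(b_n)\in\mathbf{s}'$ I set $T(\varphi)=\sum_n b_n\langle\varphi,h_n\rangle$; the pairing converges absolutely because $(b_n)\in\mathbf{s}'$ and $(\langle\varphi,h_n\rangle)\in\mathbf{s}$, and a Cauchy--Schwarz split of $(n+1)^m$ into $(n+1)^{-1}\cdot(n+1)^{m+1}$ gives $|T(\varphi)|\le C'\|\varphi\|_{m+1}$, so $T\in\mathcal{S}'$ with $\mathbf{H}(T)=(b_n)$. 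Finally, continuity of $T$ together with the $\mathcal{S}$-convergence of the Hermite series lets me write $T(\varphi)=\sum_n\langle\varphi,h_n\rangle T(h_n)=\langle\mathbf{H}(T),\mathbf{h}(\varphi)\rangle$, which is at once the claimed pairing formula and the weak representation $T=\sum_n T(h_n)h_n$. Because $\mathbf{H}$ is the transpose of the topological isomorphism $\mathbf{h}^{-1}$ between these reflexive spaces, it is itself a topological isomorphism for the strong dual topologies.

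The step I expect to be the main obstacle is surjectivity: in (a), verifying that the reconstructed series genuinely defines a Schwartz function, which is exactly where sequential completeness of $\mathcal{S}$ is essential; and in (b), the continuity estimate showing that the reconstructed functional is tempered. Everything else reduces cleanly to the single norm identity and to transposition.
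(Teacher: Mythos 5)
Your proof is correct; note, however, that the paper itself gives no proof of this theorem at all --- it is quoted as a known result, with the proof deferred to Reed--Simon (\cite{Red1}, p.~143). Your argument --- the eigenvalue/Parseval identity $\|\mathbf{h}(\varphi)\|_m=\|\varphi\|_m$ for well-definedness and isometry, sequential completeness of $\mathcal{S}$ for surjectivity in (a), the bound $\|h_n\|_m=(n+1)^m$ together with a Cauchy--Schwarz split for (b), and transposition for the topological statement --- is essentially the standard argument given in that reference, so there is nothing substantive to compare beyond noting agreement.
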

\noindent We say that the sequences $\mathbf{h}(\varphi)$ and
$\mathbf{H}(T)$ are the {\it Hermite coefficients} of the tempered
function $\varphi$ and the distribution $T$, respectively.

Next we provide the Hermite coefficients for some tempered
distributions.

\subsection{The delta distribution.} (see \cite{Miku} pp 191.)
\begin{equation}\label{delta} \delta (h_{n}) = h_{n}(0)  =
\left \{
\begin{array}{lll}

           \frac{(-1)^{\frac{n}{2}}}{\sqrt[4]{2\pi}}\sqrt{\frac{1}{2}\frac{3}{4}\cdots\frac{n-1}{n}}& & \mbox{for $n$ even},        \\
           0 & & \mbox{for $n$ odd}.
\end{array}
\right .
\end{equation}

\subsection{The constant distribution $\mathbf{1}$.} (see \cite{Miku} pp 190.)

\begin{equation}\label{uno}
1(h_{n})=\int_{-\infty}^{\infty} h_{n}(x) \; dx
  = \left \{
\begin{array}{lll}
          \sqrt[4]{8\pi}\sqrt{\frac{1}{2}\frac{3}{4}\cdots\frac{n-1}{n}} & & \mbox{for $n$
           even},\\
           0 & & \mbox{for $n$ odd.}

\end{array}
\right.
\end{equation}

\subsection{The $x_+^p$ distribution.} (see \cite{Sad1} pp 162.)

 We recall that $<x_+^p,\phi>= \int_{0}^{\infty}x^p \phi(x) \;
 dx$.
\begin{equation}\label{Heaviside}
x_+^p(h_{n}) = \left \{
\begin{array}{ll}
(\sqrt{2\pi}n!)^{- \frac{1}{2}} 2^p \Gamma(
\frac{p+1}{2})W_n(2p+1)
               & \mbox{for $n$ even},\\
   (\sqrt{2\pi}n!)^{-\frac{1}{2}}2^{p+1}\Gamma(\frac{p+2}{2})W_n(2p+1) & \mbox{for $n$ odd}
\end{array}
\right .
\end{equation}
where $W_n(x)$ are polynomials such that $W_0(x)=W_1(x)=1$ and
\[
W_{n+2}(x)=xW_n(x)+n(n-1)W_{n-2}(x).
\]
Note that if $p=0$, then $x_+^p$ is the Heaviside distribution
$H$.

\subsection{The $\delta^{\prime}$ distribution.}

\begin{equation}\label{deltaprima}
\delta^{\prime}(h_n) =-h_n^{\prime}(0)=\sqrt{n}h_{n-1}(0).
\end{equation}

\section{The Tempered  Algebra }

\noindent In order to introduce the tempered algebra we consider
$\mathcal{S}^{\mathbb{N}_0}$ the space of sequences of rapidly
decreasing smooth functions. It is clear that
$\mathcal{S}^{\mathbb{N}_0}$ has the structure of an associative,
commutative differential algebra with the natural operations:
\begin{eqnarray*}
(f_{n})+(g_{n})& = & (f_{n}+ g_{n}) \\
a(f_{n})& = & (af_{n}) \\
(f_{n})\cdot (g_{n})& = & (f_{n} g_{n}) \\
D(f_{n})& = & (Df_{n})
\end{eqnarray*}
where $(f_{n})$ and $(g_{n})$ are in $\mathcal{S}$ and $a \in
\mathbb{R}$.

\begin{definition}\label{defeal} Let
\begin{equation}
\mathcal{H}_{\mathbf{s}^{\prime}}=\{(f_{n})\in
\mathcal{S}^{\mathbb{N}_0} \ : \ \mbox{for each $m \in
\mathbb{N}_0$,} \ (\| f_{n} \|_{m})\in \mathbf{s}^{\prime} \ \}
\end{equation}
and
\begin{equation}
\mathcal{H}_{\mathbf{s}}=\{(f_{n})\in  \mathcal{S}^{\mathbb{N}_0}
\ : \ \mbox{for each $m \in \mathbb{N}_0$,} \  (\| f_{n}
\|_{m})\in \mathbf{s} \ \}.
\end{equation}

\end{definition}

\begin{lemma}\label{lema} $\mathcal{H}_{\mathbf{s}^{\prime}}$ is a subalgebra
of $\mathcal{S}^{\mathbb{N}_0}$ and $\mathcal{H}_{\mathbf{s}}$ is
a differential ideal of $\mathcal{H}_{\mathbf{s}^{\prime}}$.
\end{lemma}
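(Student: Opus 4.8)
The plan is to verify directly that the defining conditions of $\mathcal{H}_{\mathbf{s}^{\prime}}$ and $\mathcal{H}_{\mathbf{s}}$ are preserved under the algebraic operations. For the subalgebra claim, the only nontrivial point is closure under the pointwise product, since closure under sums and scalar multiples is immediate from the triangle inequality and homogeneity of each norm $\|\cdot\|_m$. First I would take $(f_n),(g_n)\in\mathcal{H}_{\mathbf{s}^{\prime}}$ and fix $m$. The key tool is the multiplication estimate \eqref{eq1}: there exist $r,s\in\mathbb{N}_0$ and $C_m>0$ with $\|f_n g_n\|_m\le C_m\|f_n\|_r\|g_n\|_s$ for every $n$. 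Since $(f_n)\in\mathcal{H}_{\mathbf{s}^{\prime}}$ gives $(\|f_n\|_r)\in\mathbf{s}^{\prime}$ and similarly $(\|g_n\|_s)\in\mathbf{s}^{\prime}$, there are constants $C,C'$ and exponents $k,\ell$ with $\|f_n\|_r\le C(n+1)^k$ and $\|g_n\|_s\le C'(n+1)^\ell$. Hence $\|f_n g_n\|_m\le C_m C C'(n+1)^{k+\ell}$, which exhibits $(\|f_n g_n\|_m)\in\mathbf{s}^{\prime}$. As $m$ was arbitrary, $(f_ng_n)\in\mathcal{H}_{\mathbf{s}^{\prime}}$.

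For the ideal claim I would show $\mathcal{H}_{\mathbf{s}}$ is closed under the algebra operations of $\mathcal{H}_{\mathbf{s}^{\prime}}$, absorbs products with elements of $\mathcal{H}_{\mathbf{s}^{\prime}}$, and is stable under $D$. Closure under addition and scalar multiplication again follows from the triangle inequality, using that $\mathbf{s}$ is itself a subspace. For the absorption property, let $(f_n)\in\mathcal{H}_{\mathbf{s}}$ and $(g_n)\in\mathcal{H}_{\mathbf{s}^{\prime}}$; fixing $m$ and applying \eqref{eq1} again yields $\|f_n g_n\|_m\le C_m\|f_n\|_r\|g_n\|_s$. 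Now $(\|f_n\|_r)\in\mathbf{s}$, so it decays faster than any polynomial, while $(\|g_n\|_s)\le C(n+1)^k$ grows only polynomially; the product of a rapidly decreasing sequence with a polynomially bounded one is again rapidly decreasing, so $(\|f_n g_n\|_m)\in\mathbf{s}$, giving $(f_n g_n)\in\mathcal{H}_{\mathbf{s}}$. The same estimate with both factors taken in $\mathcal{H}_{\mathbf{s}}$ shows $\mathcal{H}_{\mathbf{s}}$ is a subalgebra, and the absorption is exactly what makes it an ideal.

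The remaining point, stability of $\mathcal{H}_{\mathbf{s}}$ under differentiation, is where I expect the main obstacle, since $\|\cdot\|_m$ is tied to the operator $N+1=\tfrac12(-\frac{d^2}{dx^2}+x^2+1)$ rather than to $D$ directly, and $D$ does not commute with $N$. The clean way around this is to control $\|D f_n\|_m$ by some fixed norm $\|f_n\|_{m'}$ uniformly in $n$. Concretely, one checks that $D$ is a continuous linear operator on $\mathcal{S}$ for the graded norm system: there is an index $m'$ (one may take $m'=m+1$ by expressing $D$ through the raising/lowering recursions for $h_n$, equivalently bounding $\|D\varphi\|_m\le K_m\|\varphi\|_{m+1}$) and a constant $K_m$ with $\|D\varphi\|_m\le K_m\|\varphi\|_{m'}$ for all $\varphi\in\mathcal{S}$. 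Granting this, if $(f_n)\in\mathcal{H}_{\mathbf{s}}$ then $\|Df_n\|_m\le K_m\|f_n\|_{m'}$, and since $(\|f_n\|_{m'})\in\mathbf{s}$ the majorant forces $(\|Df_n\|_m)\in\mathbf{s}$; as $m$ is arbitrary, $(Df_n)\in\mathcal{H}_{\mathbf{s}}$. I would single out and prove this continuity-of-$D$ estimate first, as it is the only step that genuinely uses the structure of the norms rather than just \eqref{eq1} and the algebraic description of $\mathbf{s}$ and $\mathbf{s}^{\prime}$.
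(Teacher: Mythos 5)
Your proof is correct, and its skeleton is the same as the paper's: closure of $\mathcal{H}_{\mathbf{s}^{\prime}}$ under products via the multiplication estimate (\ref{eq1}) plus polynomial bounds, absorption by $\mathcal{H}_{\mathbf{s}}$ via (\ref{eq1}) plus the fact that $\mathbf{s}$ absorbs polynomially bounded multipliers (the paper verifies this by estimating $\sum_n (n+1)^{2l}\|f_ng_n\|_m^2$ directly, which is exactly your ``rapidly decreasing times polynomially bounded'' remark written out), and reduction of $D$-stability to a uniform continuity estimate $\|D\varphi\|_m \leq K_m \|\varphi\|_{m^{\prime}}$ on $\mathcal{S}$. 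The one point of genuine divergence is how that last estimate is obtained. You derive it from the recursion $2h_n^{\prime}=\sqrt{n}\,h_{n-1}-\sqrt{n+1}\,h_{n+1}$: writing $\varphi=\sum_k a_k h_k$, the $k$-th Hermite coefficient of $D\varphi$ is $\frac{1}{2}(\sqrt{k+1}\,a_{k+1}-\sqrt{k}\,a_{k-1})$, and the extra factors $\sqrt{k}$ cost exactly one unit of the grading, giving the explicit index $m^{\prime}=m+1$; this stays entirely inside the $L^2$-Hermite description of the norms. The paper instead invokes the equivalence of $\{\|\cdot\|_{m}\}$ with the weighted sup-seminorms $\{\|\cdot\|_{\alpha,\beta,\infty}\}$, under which differentiation acts trivially ($\|D\varphi\|_{\alpha,\beta,\infty}=\|\varphi\|_{\alpha,\beta+1,\infty}$), at the price of losing track of which index $m^{\prime}$ works. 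Both routes are legitimate: yours is more self-contained and quantitative, while the paper's is shorter because the equivalence of the two seminorm systems was already recorded in Section 2 and can be reused as a black box.
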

\begin{proof} Let $(f_n),(g_n) \in \mathcal{H}_{\mathbf{s}^{\prime}}$ and $m \in
\mathbb{N}_0$. Applying the inequality (\ref{eq1}), there exists
$r, s \in \mathbb{N}_0$ and a constant $C_m>0$ such that
\[
\| f_n g_n\|_m  \leq  C_m \| f_{n} \|_r\| g_n \|_s.
\]
By definition, there exists constants $D, E>0$ and $p, q\in
\mathbb{N}_0$ such that
\begin{eqnarray*}
\| f_n \|_r & \leq & D(n+1)^p \\
\| g_n \|_s & \leq & E(n+1)^q.
\end{eqnarray*}
Combining these inequalities, we obtain
\[
\| f_n g_n\|_m  \leq C_mDE(n+1)^{p+q}.
\]
This proves that $(\| f_n g_n\|_m ) \in \mathbf{s}^{\prime}$, thus
$(f_n)\cdot(g_n) \in \mathcal{H}_{\mathbf{s}^{\prime}}$.

\noindent Now, we prove that $\mathcal{H}_{\mathbf{s}}$ is an
ideal of $\mathcal{H}_{\mathbf{s}^{\prime}}$. Let $(f_n) \in
\mathcal{H}_{\mathbf{s}^{\prime}}$, $(g_n) \in
\mathcal{H}_{\mathbf{s}}$ and $m \in \mathbb{N}_0$. From
(\ref{defeal}) we have that for each $r \in \mathbb{N}_0$ there
exists a constant $D>0$ and $p\in \mathbb{N}_0$ such that
\[
\| f_n \|_r  \leq  D(n+1)^p
\]
and for all $s,l \in \mathbb{N}_0$,
\[
\|(\| g_n\|_s)\|^2_l =\sum_{n=0}^{\infty}(n+1)^{2l}\| g_n\|^2_s <
\infty.
\]

\noindent Combining the inequality (\ref{eq1}) with the above
equations we obtain
\begin{eqnarray*}
\|(\| f_n g_n\|_m)\|^2_l & = & \sum_{n=0}^{\infty}(n+1)^{2l}\| f_n
g_n\|^2_m \\
& \leq &  C^2_m \sum_{n=0}^{\infty}(n+1)^{2l} \| f_{n} \|^2_r\| g_n \|^2_s \\
 & \leq & C^2_mD^2\sum_{n=0}^{\infty}(n+1)^{2(l+p)}\| g_n \|^2_s \\
 & < & \infty.
\end{eqnarray*}
Note that we have proved that $(\| f_n g_n\|_m) \in \mathbf{s}$,
for all $m \in \mathbb{N}_0$, therefore $(f_n)\cdot(g_n) \in
\mathcal{H}_{\mathbf{s}}$.

\noindent Finally, we prove that if $(f_n) \in
\mathcal{H}_{\mathbf{s}}$ then $(Df_n) \in
\mathcal{H}_{\mathbf{s}}$. In fact, let $m \in \mathbb{N}_0$.
Since $\{ \|\cdot\|_{m}: m \in \mathbb{N}_0\}$ is equivalent to
$\{ \|\cdot\|_{\alpha, \beta, \infty}: \alpha, \beta \in
\mathbb{N}_0\}$ we have that there exists $\alpha, \beta,
m_{\alpha, \beta} \in \mathbb{N}_0$ and a constants
$C_m,C_{\alpha,\beta+1}>0$ such that
\begin{eqnarray*}
\|Df_{n}\|_{m}& \leq & C_m\|Df_{n}\|_{\alpha, \beta, \infty} \\
& = & C_m\|f_{n}\|_{\alpha, \beta+1, \infty} \\
& \leq & C_mC_{\alpha,\beta+1}\|f_{n}\|_{m_{\alpha, \beta}}.
\end{eqnarray*}
As $(f_n) \in \mathcal{H}_{\mathbf{s}}$ we have $(\|Df_{n}\|_{m})
\in \mathbf{s}$, this implies that $(Df_n) \in
\mathcal{H}_{\mathbf{s}}$.
\end{proof}

\begin{proposition}\label{includ} Let $T\in \mathcal{S}^{\prime}$. Then
$(T_{n})\in  \mathcal{H}_{\mathbf{s}^{\prime}}$, where $T_n=\sum
_{j=0}^n T(h_j)h_j$.
\end{proposition}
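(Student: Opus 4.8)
The plan is to verify directly the two conditions in Definition~\ref{defeal} that characterize membership in $\mathcal{H}_{\mathbf{s}^{\prime}}$: that $(T_n)$ is a sequence of functions in $\mathcal{S}$, and that for each fixed $m \in \mathbb{N}_0$ the numerical sequence $(\|T_n\|_m)_n$ exhibits polynomial growth in $n$. The first condition is immediate, since each $T_n = \sum_{j=0}^n T(h_j)h_j$ is a finite linear combination of the Hermite functions $h_0,\dots,h_n$, and each $h_j \in \mathcal{S}$; hence $T_n \in \mathcal{S}$ for every $n$, so $(T_n) \in \mathcal{S}^{\mathbb{N}_0}$.

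The heart of the argument is to compute $\|T_n\|_m$ explicitly. First I would use orthonormality of $\{h_j\}$ in $L^2(\mathbb{R})$ to identify the Fourier--Hermite coefficients of the partial sum as $\langle T_n, h_j\rangle = T(h_j)$ for $j \le n$ and $\langle T_n, h_j\rangle = 0$ for $j > n$. Substituting these into the identity $\|\varphi\|_m^2 = \sum_{j=0}^\infty (j+1)^{2m}\langle \varphi, h_j\rangle^2$ then collapses the series to the finite sum
\[
\|T_n\|_m^2 = \sum_{j=0}^n (j+1)^{2m}\, T(h_j)^2 .
\]

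Next I would invoke the N-representation theorem (Theorem~\ref{rpreS}\,b): since $T \in \mathcal{S}^{\prime}$, we have $\mathbf{H}(T) = (T(h_n)) \in \mathbf{s}^{\prime}$, and by the definition of $\mathbf{s}^{\prime}$ this furnishes a constant $C>0$ and an exponent $p \in \mathbb{N}_0$ with $|T(h_j)| \le C(j+1)^p$ for all $j$. Inserting this bound into the finite sum and estimating each of the $n+1$ terms by the largest gives $\|T_n\|_m^2 \le C^2 \sum_{j=0}^n (j+1)^{2(m+p)} \le C^2 (n+1)^{2(m+p)+1}$, whence, taking square roots, $\|T_n\|_m \le C(n+1)^{m+p+1}$. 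Since $m$ was arbitrary, every sequence $(\|T_n\|_m)_n$ grows at most polynomially, i.e.\ lies in $\mathbf{s}^{\prime}$, which is precisely the requirement for $(T_n) \in \mathcal{H}_{\mathbf{s}^{\prime}}$.

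I do not expect a genuine obstacle here; the computation is short once the right formula is used. The only point demanding slight care is the coefficient identification in the second step, where one must exploit orthonormality to see that the partial sum \emph{truncates} the Hermite expansion exactly, so that the norm reduces to a finite sum. Everything after that is routine polynomial bookkeeping, and the polynomial bound on the Hermite coefficients is handed to us by the representation theorem.
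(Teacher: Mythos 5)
Your proposal is correct and follows essentially the same argument as the paper: both invoke the polynomial bound $|T(h_j)|\leq C(j+1)^p$ from the N-representation theorem, reduce $\|T_n\|_m^2$ to the finite sum $\sum_{j=0}^n (j+1)^{2m}|T(h_j)|^2$, and conclude with routine polynomial estimation to get a bound of the form $C(n+1)^{2(m+p+1)}$. The only difference is presentational: you justify the norm identity via orthonormality and check $T_n\in\mathcal{S}$ explicitly, steps the paper leaves implicit.
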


\begin{proof} From Theorem \ref{rpreS}, there exists a constant $C>0$ and $p\in
\mathbb{N}_0$ such that
\[
| T(h_j) |  \leq  C(j+1)^p
\]
for all $j \in \mathbb{N}_0$. Then

\begin{eqnarray*}
\|T_{n}\|_{m}^{2}& = & \sum_{j=0}^{n}(j+1)^{2m} |T(h_{j})|^2 \\
& \leq & (n+1)^{2m} \sum_{j=0}^{n} |T(h_{j})|^2 \\
& \leq  & C(n+1)^{2(m+p+1)}.
\end{eqnarray*}
This completes the proof.
\end{proof}

\begin{definition}
The tempered algebra is defined as
\[
\mathcal{H}=\mathcal{H}_{\mathbf{s}^{\prime}}/\mathcal{H}_{\mathbf{s}}.
\]
The elements of $\mathcal{H}$ are called tempered generalized
functions.
\end{definition}
Let $(f_n) \in \mathcal{H}_{s^{\prime}}$ we will use $[f_n]$ to
denote the equivalent class $(f_n) + \mathcal{H}_{s}$.
\begin{proposition}\label{includ2}
Let $\iota:\mathcal{S}^{\prime}\rightarrow \mathcal{H}$ be the
application
\[
\iota(T)=[T_n].
\]
Then $\iota$ is a linear embedding. Moreover, we have that

$\mathbf{a)}$ For all $\varphi \in \mathcal{S}$,
\[
\iota(\varphi)=[\varphi].
\]

$\mathbf{b)}$ For all $T \in \mathcal{S}^{\prime}$
\[
\iota(DT)=D\iota(T).
\]
\end{proposition}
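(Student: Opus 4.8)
The plan is to check in turn that $\iota$ is well defined, linear and injective, and then to settle the two compatibility properties. Well-definedness is free: Proposition~\ref{includ} already gives $(T_n)\in\mathcal{H}_{\mathbf{s}^{\prime}}$, so $\iota(T)=[T_n]$ is a genuine element of $\mathcal{H}$. Linearity is immediate, since $(S+T)_n=S_n+T_n$ and $(\lambda T)_n=\lambda T_n$ for every partial sum, whence $\iota(S+\lambda T)=\iota(S)+\lambda\iota(T)$. For injectivity I would reduce to a trivial kernel: suppose $\iota(T)=0$, i.e. $(T_n)\in\mathcal{H}_{\mathbf{s}}$. Taking $m=0$, the sequence $(\|T_n\|_0)_n$ lies in $\mathbf{s}\subset\ell^2$, so $\|T_n\|_0\to 0$. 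But by the Parseval identity of Section~2, $\|T_n\|_0^2=\sum_{j=0}^n|T(h_j)|^2$ is nondecreasing in $n$, and a nonnegative nondecreasing sequence tending to $0$ vanishes identically; hence $T(h_j)=0$ for all $j$, and the injectivity of $\mathbf{H}$ in Theorem~\ref{rpreS} forces $T=0$.

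For part (a) I must show that the constant sequence $(\varphi,\varphi,\dots)$ and the sequence of partial sums $(\varphi_n)$ differ by an element of $\mathcal{H}_{\mathbf{s}}$, i.e. $(\varphi-\varphi_n)_n\in\mathcal{H}_{\mathbf{s}}$. Writing $c_j=\langle\varphi,h_j\rangle$, the Parseval identity applied to the tail gives $\|\varphi-\varphi_n\|_m^2=\sum_{j>n}(j+1)^{2m}c_j^2$. To verify $(\|\varphi-\varphi_n\|_m)_n\in\mathbf{s}$ I would, for each $l$, interchange the order of summation:
\[
\sum_{n=0}^\infty (n+1)^{2l}\sum_{j>n}(j+1)^{2m}c_j^2=\sum_{j=1}^\infty (j+1)^{2m}c_j^2\sum_{n=0}^{j-1}(n+1)^{2l}\le \sum_{j=1}^\infty (j+1)^{2m+2l+1}c_j^2,
\]
which is finite because $\mathbf{h}(\varphi)=(c_j)\in\mathbf{s}$ (Theorem~\ref{rpreS}). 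This yields $\iota(\varphi)=[\varphi]$.

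For part (b) the natural route is to compare the representatives $(DT)_n=\sum_{j=0}^n (DT)(h_j)h_j$ of $\iota(DT)$ and $(T_n)'=\sum_{j=0}^n T(h_j)h_j'$ of $D\iota(T)$. Using $(DT)(h_j)=-T(h_j')$ together with the derivative recurrence $2h_j'=\sqrt{j}\,h_{j-1}-\sqrt{j+1}\,h_{j+1}$, one re-expands both finite sums in the Hermite basis; the interior coefficients cancel and only the two endpoint contributions remain, giving
\[
(DT)_n-(T_n)'=\frac{\sqrt{n+1}}{2}\,(\,T(h_{n+1})\,h_n+T(h_n)\,h_{n+1}\,).
\]
Everything then reduces to showing that this boundary term $(R_n)_n$ lies in $\mathcal{H}_{\mathbf{s}}$, and this is the step I expect to be the main obstacle. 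By orthogonality and $\|h_k\|_m=(k+1)^m$ one computes $\|R_n\|_m^2=\frac{n+1}{4}\,(\,(n+1)^{2m}T(h_{n+1})^2+(n+2)^{2m}T(h_n)^2\,)$, so the whole question is whether the factor $\sqrt{n+1}$ produced by differentiation is absorbed by the decay of the Hermite coefficients $T(h_n)$. When $\varphi\in\mathcal{S}$ this is automatic, and in fact part (b) then follows already from part (a), since $D\iota(\varphi)=D[\varphi]=[\varphi']=\iota(D\varphi)$. For a general $T\in\mathcal{S}^{\prime}$, however, $(T(h_n))$ is only of polynomial growth, so establishing $(R_n)_n\in\mathcal{H}_{\mathbf{s}}$ is exactly the delicate point on which the argument turns, and it is the estimate I would scrutinize most carefully.
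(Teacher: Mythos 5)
Your treatment of well-definedness, linearity, injectivity and part (a) is correct. These are mild variants of the paper's own arguments: for injectivity the paper deduces $T=0$ from the weak convergence of $T_n$ to $0$, while you use monotonicity of $\|T_n\|_0^2=\sum_{j\le n}|T(h_j)|^2$; for (a) the paper verifies the sup-type characterization of $\mathbf{s}$ by showing $\lim_n (n+1)^{2s}\|\varphi-\varphi_n\|_m^2=0$, while you verify the summed characterization via interchange of summation. Both routes are sound.

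Part (b) is where your proposal stops short, and your instinct about where the difficulty lies is exactly right --- but the situation is worse than an estimate left unfinished: the missing estimate is false, and with it part (b) of the Proposition as stated. Your boundary term is computed correctly,
\[
(DT)_n - D(T_n)=\frac{\sqrt{n+1}}{2}\left(T(h_{n+1})h_n+T(h_n)h_{n+1}\right)=:R_n,
\]
and for $T=\delta$ it is not negligible: for even $n$ we have $h_{n+1}(0)=0$, hence $R_n=\frac{\sqrt{n+1}}{2}h_n(0)h_{n+1}$ and $\|R_n\|_0=\frac{\sqrt{n+1}}{2}|h_n(0)|$. By (\ref{delta}) and Wallis' product, $|h_n(0)|\sim c\,n^{-1/4}$ along even $n$, so $\|R_n\|_0\sim c\,n^{1/4}\rightarrow\infty$; in particular $(R_n)\notin\mathcal{H}_{\mathbf{s}}$ (membership would force $\|R_n\|_0\rightarrow 0$), and therefore $D\iota(\delta)\neq\iota(D\delta)$ in $\mathcal{H}$. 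Compare this with the paper's proof of (b): it passes from $\sum_{j=0}^nT(h_j)\frac{1}{2}(\sqrt{j}h_{j-1}-\sqrt{j+1}h_{j+1})$ to $-\sum_{j=0}^n\frac{1}{2}(\sqrt{j}T(h_{j-1})-\sqrt{j+1}T(h_{j+1}))h_j$, a summation by parts that is only valid up to endpoint terms; the reindexing silently discards exactly the two terms constituting $R_n$ (already for $T=\delta$, $n=2$, the two sides differ by $\frac{\sqrt{3}}{2}h_2(0)h_3$). So the paper's proof asserts $D(T_n)=(DT)_n$ identically, which is false, and you have in fact uncovered a genuine flaw in the paper rather than failed to reproduce a correct argument. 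What does survive is the statement in the sense of association (Definition \ref{ass1}): since $|T(h_k)|\le C(k+1)^p$ while $|<h_k,\psi>|$ decays faster than any power of $k$ for $\psi\in\mathcal{S}$, one gets $<R_n,\psi>\rightarrow 0$, i.e. $D\iota(T)\approx\iota(DT)$; and, as you observe, genuine equality in $\mathcal{H}$ does hold for $T\in\mathcal{S}$, by part (a).
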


\begin{proof} It is clear from the previous Proposition, that $\iota$ is well defined and a linear
application. We claim that $\iota(T)=0$ implies $T=0$. Since
$(T_n) \in \mathcal{H}_{\mathbf{s}}$, we have
\[
\lim_{n \rightarrow \infty}\|T_{n}\|_{m}=0
\]
for all $m \in \mathbb{N}_0$, therefore the sequence $(T_{n})$
converges weakly to $0$, which proves the claim.

\noindent $\mathbf{a)}$ Let $\varphi\in \mathcal{S}$. We have that
$\iota (\varphi)=[\varphi_n]$ where $\varphi_{n}=\sum_{j=0}^{n}
<\varphi,h_j> h_{j}$. Then for all $m, s \in \mathbb{N}_0$ we have
that
\begin{eqnarray*}
\lim_{n \rightarrow
\infty}(n+1)^{2s}\|\varphi-\varphi_{n}\|_{m}^{2}& = & \lim_{n
\rightarrow\infty}(n+1)^{2s}\sum_{j=n+1}^{\infty}(j+1)^{2m} |<\varphi,h_j>|^{2} \\
& \leq & \lim_{n \rightarrow \infty}
\sum_{j=n+1}^{\infty}(j+1)^{2(m+s)} |<\varphi,h_j>|^{2}  \\ & = &
0
\end{eqnarray*}
where the last equality follows from $\|\varphi\|_{m+s} < \infty$.
Therefore, we conclude that $(\|\varphi-\varphi_{n}\|_m) \in
\mathbf{s}$. Since $(\varphi -\varphi_n) \in
\mathcal{H}_{\mathbf{s}}$, it follows that
$\iota(\varphi)=[\varphi]$.

\noindent $\mathbf{b)}$ Let $T \in \mathcal{S}^{\prime}$. Using the
above definitions and properties of Hermite functions it follows that:
\begin{eqnarray*}
DT_n & = & D(\sum_{j=0}^nT(h_j)h_j) \\
& = & \sum_{j=0}^nT(h_j)Dh_j \\
& = &
\sum_{j=0}^nT(h_j)\frac{1}{2}(\sqrt{j}h_{j-1}-\sqrt{j+1}h_{j+1})
\\
& = &
-\sum_{j=0}^n\frac{1}{2}(\sqrt{j}T(h_{j-1})-\sqrt{j+1}T(h_{j+1}))h_j
\\
& = &\sum_{j=0}^nDT(h_j)h_j \\
& = & (DT)_n.
\end{eqnarray*}
Therefore $D\iota(T)=[DT_n]=[(DT)_n]=\iota(DT)$.
\end{proof}

\begin{corollary} Let $\varphi,\psi\in \mathcal{S}$. Then
\[
\iota(\varphi\psi)=\iota(\varphi) \cdot \iota(\psi).
\]
\end{corollary}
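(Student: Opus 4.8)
The plan is to reduce everything to part $\mathbf{a)}$ of Proposition \ref{includ2}, which identifies the embedding of a test function with the class of the corresponding constant sequence. First I note that since $\mathcal{S}$ is closed under multiplication (indeed inequality (\ref{eq1}) guarantees $\varphi\psi \in \mathcal{S}$ whenever $\varphi,\psi \in \mathcal{S}$), all three functions $\varphi$, $\psi$ and $\varphi\psi$ are legitimate arguments of $\iota$ viewed on $\mathcal{S}$. Applying part $\mathbf{a)}$ of Proposition \ref{includ2} three times then gives
\[
\iota(\varphi)=[\varphi], \qquad \iota(\psi)=[\psi], \qquad \iota(\varphi\psi)=[\varphi\psi],
\]
where, as in that proposition, $[\varphi]$ denotes the class in $\mathcal{H}$ of the constant sequence whose every term equals $\varphi$.

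The second ingredient is the definition of the product in $\mathcal{H}$. By Lemma \ref{lema}, $\mathcal{H}_{\mathbf{s}}$ is an ideal of $\mathcal{H}_{\mathbf{s}^{\prime}}$, so the pointwise product on $\mathcal{S}^{\mathbb{N}_0}$ descends to a well-defined product on the quotient, namely $[f_n]\cdot[g_n]=[f_n g_n]$. Taking $(f_n)$ to be the constant sequence $\varphi$ and $(g_n)$ the constant sequence $\psi$, the pointwise product is the sequence whose $n$-th term is $\varphi\psi$ for every $n$, that is, the constant sequence $\varphi\psi$. Hence $[\varphi]\cdot[\psi]=[\varphi\psi]$.

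Combining the two observations yields the assertion directly:
\[
\iota(\varphi)\cdot\iota(\psi)=[\varphi]\cdot[\psi]=[\varphi\psi]=\iota(\varphi\psi).
\]
I expect there to be no genuine obstacle here, precisely because part $\mathbf{a)}$ does the heavy lifting: it replaces the truncated Fourier--Hermite sums $\varphi_n=\sum_{j=0}^n\langle\varphi,h_j\rangle h_j$ by the far simpler constant representatives. Had one instead tried to argue from the definition $\iota(\varphi)=[\varphi_n]$ literally, the task would be to verify that the sequence $(\varphi_n\psi_n-(\varphi\psi)_n)$ lies in $\mathcal{H}_{\mathbf{s}}$, which requires controlling the difference between the product of truncations and the truncation of the product in every norm $\|\cdot\|_m$ with rapid decay in $n$; this is exactly the work that part $\mathbf{a)}$ allows us to bypass.
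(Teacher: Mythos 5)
Your proof is correct and is essentially the paper's argument in different packaging: both rest on Proposition \ref{includ2}$\mathbf{a)}$ (applied to $\varphi$, $\psi$, and $\varphi\psi$) together with the ideal property of Lemma \ref{lema}. The only difference is that the paper writes out the cross-term decomposition of $(\varphi\psi)-(\varphi_n)\cdot(\psi_n)$ explicitly, whereas you absorb exactly that computation into the statement that multiplication is well defined on the quotient $\mathcal{H}_{\mathbf{s}^{\prime}}/\mathcal{H}_{\mathbf{s}}$.
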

\begin{proof}
We first observe that
\[
(\varphi\psi)-(\varphi_{n})\cdot(\psi_{n})=(\varphi)\cdot(\psi-\psi_{n})+(\varphi-\varphi_{n})\cdot(\psi).
\]
Applying Proposition \ref{includ2} and Lemma \ref{lema} we obtain
$(\varphi\psi)-(\varphi_{n})\cdot(\psi_{n}) \in
\mathcal{H}_{\mathbf{s}}$. Therefore
$\iota(\varphi\psi)=\iota(\varphi) \cdot \iota(\psi)$.
\end{proof}

\begin{remark}
Let $\mathcal{O}_{M}$ be the ring of multipliers of $\mathcal{S}$
(see \cite{Red1}). We have a natural multiplication from
$\mathcal{O}_{M}$ by $\mathcal{H}$ into $\mathcal{H}$, defined by
\[
g[f_n]:=[gf_n]
\]
where $g \in \mathcal{O}_{M}$ and $[f_n] \in \mathcal{H}$.

\noindent It is easy to check that the product is well defined and
that $\mathcal{H}$ is a $\mathcal{O}_{M}$-module.
\end{remark}

\noindent Now, we give two examples of tempered generalized
functions.

\begin{example} The distribution $\delta$. We have that $\iota(\delta)=[\delta_n]$, where
$\delta_n = \sum_{j=0}^{n}h_j(0)h_j$. Applying the formula
(\ref{delta}) and the following equality

\[
\sum_{j=0}^{n}h_j(x)h_j(y)=\frac{\sqrt{n+1}}{x-y}\Big(h_{n+1}(x)h_{n}(y)-h_{n+1}(y)h_{n}(x)\Big),
\]

we see that

\begin{equation}\label{delta1} \delta_n(x) =
\left \{
\begin{array}{lll}

           \sqrt{n+1}
\frac{(-1)^{\frac{n}{2}}}{\sqrt[4]{2\pi}}\sqrt{\frac{1}{2}\frac{3}{4}\cdots\frac{n-1}{n}}
 \  \ \frac{h_{n+1}(x)}{x}& & \mbox{for $n$ even},        \\
           \sqrt{n+1}
\frac{(-1)^{\frac{n+3}{2}}}{\sqrt[4]{2\pi}}\sqrt{\frac{1}{2}\frac{3}{4}\cdots\frac{n}{n+1}}
 \  \ \frac{h_{n}(x)}{x} & & \mbox{for $n$ odd}.
\end{array}
\right .
\end{equation}
\end{example}

\begin{example} The element $\delta^{2}$. We have that $\delta^{2}\equiv \iota(\delta)\cdot
\iota(\delta)=[\delta_n^2]$. From (\ref{delta1}) it follows that

\begin{equation}\label{deltaquadrado} \delta^2_n (x)=
\left \{
\begin{array}{lll}

           (n+1)
\frac{1}{\sqrt{2\pi}}(\frac{1}{2}\frac{3}{4}\cdots\frac{n-1}{n})
 \  \ \frac{h_{n+1}^{2}(x)}{x^{2}}& & \mbox{for $n$ even},        \\
           (n+1)
\frac{1}{\sqrt{2\pi}}(\frac{1}{2}\frac{3}{4}\cdots\frac{n}{n+1})
 \  \ \frac{h_{n}^{2}(x)}{x^{2}}& & \mbox{for $n$ odd}.
\end{array}
\right .
\end{equation}

\end{example}

\noindent We introduce the concept of association for tempered
generalized functions.

\begin{definition}\label{ass1} Let $[f_n]$ and $[g_n]$ be tempered generalized functions. We say
that $[f_n]$ and $[g_n]$ are associated, denoted by $ [f_n]
\approx [g_n]$, if for all $\varphi \in \mathcal{S}$
\[
\lim_{n\rightarrow\infty}<f_n-g_n,\varphi>=0.
\]
\end{definition}
\noindent We observe that the relation $\approx$ is well defined.
In fact, if $(l_n)\in \mathcal{H}_{\mathbf{s}}$ and $\varphi \in
\mathcal{S}$ we have that
$\lim_{n\rightarrow\infty}<l_n,\varphi>=0$. It follows immediately
that $\approx$ is an equivalence relation on $\mathcal{H}$.
\begin{proposition}\label{equias}
$\mathbf{a)}$ Let $[f_n], [g_n] \in \mathcal{H}$ such that $ [f_n]
\approx [g_n]$. Then $ D^{\alpha}[f_n] \approx D^{\alpha}[g_n]$
for all $\alpha \in \mathbb{N}$.

$\mathbf{b)}$ Let $[f_n], [g_n] \in \mathcal{H}$ such that $ [f_n]
\approx [g_n]$ and $l \in \mathcal{O}_M$. Then $ l[f_n] \approx
l[g_n]$.

$\mathbf{c)}$ Let $T, S \in \mathcal{S}^{\prime}$ such that
$\iota(T)\approx \iota(S)$. Then $T=S$.
\end{proposition}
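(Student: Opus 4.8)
The plan is to treat all three parts by reducing to the difference and transferring each operation onto the test function, where one can invoke the defining property of $\mathcal{S}$, of $\mathcal{O}_M$, and of the representation theorem respectively. For parts $\mathbf{a)}$ and $\mathbf{b)}$ I would set $w_n = f_n - g_n$, so that the hypothesis $[f_n]\approx[g_n]$ reads $\lim_{n\to\infty}\langle w_n,\varphi\rangle = 0$ for every $\varphi\in\mathcal{S}$ (Definition \ref{ass1}); since differentiation and multiplication by $l$ are defined componentwise on representatives, it suffices to show that the corresponding limits for $D^\alpha w_n$ and $l w_n$ vanish.

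For part $\mathbf{a)}$, because each $w_n\in\mathcal{S}$ is smooth and rapidly decreasing, integration by parts produces no boundary terms and yields $\langle D^\alpha w_n,\varphi\rangle = (-1)^{\alpha}\langle w_n, D^\alpha\varphi\rangle$. Since $\mathcal{S}$ is closed under differentiation, $D^\alpha\varphi\in\mathcal{S}$, and applying the hypothesis with the test function $D^\alpha\varphi$ gives $\lim_{n\to\infty}\langle D^\alpha w_n,\varphi\rangle = 0$. For part $\mathbf{b)}$, the argument is identical in spirit: $\langle l w_n,\varphi\rangle = \langle w_n, l\varphi\rangle$, and the defining property of the multiplier ring is exactly that $l\varphi\in\mathcal{S}$ whenever $l\in\mathcal{O}_M$ and $\varphi\in\mathcal{S}$, so applying the hypothesis to the test function $l\varphi$ closes the argument.

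Part $\mathbf{c)}$ is the substantive one. Since $\iota$ is linear (Proposition \ref{includ2}), set $R = T - S$, so that $\iota(T)\approx\iota(S)$ is equivalent to $[R_n]\approx 0$, where $R_n = \sum_{j=0}^n R(h_j)h_j$; explicitly, $\lim_{n\to\infty}\langle R_n,\varphi\rangle = 0$ for all $\varphi\in\mathcal{S}$. The key observation is that $\langle R_n,\varphi\rangle = \sum_{j=0}^{n} R(h_j)\langle h_j,\varphi\rangle$ is precisely the $n$-th partial sum of the Hermite expansion of $R(\varphi)$, which converges to $R(\varphi)$ by the weak convergence statement in the $N$-representation theorem (Theorem \ref{rpreS}, $\mathbf{b)}$), namely $R(\varphi) = \sum_{j=0}^\infty R(h_j)\langle h_j,\varphi\rangle$. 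Combining the two facts gives $R(\varphi) = \lim_{n\to\infty}\langle R_n,\varphi\rangle = 0$ for every $\varphi\in\mathcal{S}$, hence $R = 0$ and $T = S$.

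I expect the only delicate point to be in part $\mathbf{c)}$: one must recognise that the association limit computes exactly the weak limit of the partial sums $R_n$, which the representation theorem identifies with $R$ itself; once this identification is made the conclusion is immediate. Parts $\mathbf{a)}$ and $\mathbf{b)}$ are routine, hinging only on the stability of $\mathcal{S}$ under differentiation and under multiplication by elements of $\mathcal{O}_M$.
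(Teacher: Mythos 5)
Your proof is correct and takes essentially the same route as the paper's: parts $\mathbf{a)}$ and $\mathbf{b)}$ are exactly the paper's argument (move the derivative, respectively the multiplier $l$, onto the test function and use that $\mathcal{S}$ is stable under these operations). In part $\mathbf{c)}$ the only cosmetic difference is that the paper tests association only against the Hermite functions $h_k$ (where the pairing with $(T-S)_n$ stabilizes at $(T-S)(h_k)$ by orthonormality) and then invokes injectivity of the coefficient map $\mathbf{H}$, whereas you test against arbitrary $\varphi\in\mathcal{S}$ and use the weak convergence $R(\varphi)=\sum_{j} R(h_j)\langle h_j,\varphi\rangle$; both are immediate consequences of Theorem \ref{rpreS} and the arguments are equivalent in substance.
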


\begin{proof}
\noindent $\mathbf{a)}$ By integration by parts and hypothesis,
\begin{eqnarray*}
\lim_{n\rightarrow\infty}<D^{\alpha}f_n -D^{\alpha}g_n, \varphi> &
= & \lim_{n\rightarrow\infty}<f_n -g_n, (-1)^{\alpha}D^{\alpha
}\varphi> \\ & = & 0
\end{eqnarray*}
for all $\varphi\in \mathcal{S}$, that is, $ D^{\alpha}
[f_n]\approx D^{\alpha}[g_n]$.

\noindent $\mathbf{b)}$ Let $\varphi\in \mathcal{S}$. As $l \in
\mathcal{O}_M$ we have $l\varphi \in \mathcal{S}$. By assumption,
\begin{eqnarray*}
\lim_{n\rightarrow\infty}<lf_n -lg_n, \varphi> & = &
\lim_{n\rightarrow\infty} <f_n -g_n, l\varphi> \\
& = & 0.
\end{eqnarray*}
We conclude that $ l[f_n] \approx l[g_n]$.

\noindent $\mathbf{c)}$ By definition, $\lim_{n\rightarrow\infty}
\int (T-S)_n(x) \ h_{k}(x) \ dx =(T-S)(h_k)$ for all $k\in
\mathbf{N}_0$. But $\lim_{n\rightarrow\infty} \int (T-S)_n(x) \
h_{k}(x) \ dx =0$ since $\iota(T)\approx \iota(S)$. Applying the
N-representation theorem we conclude that $T=S$.
\end{proof}

\begin{example}  $x \iota( \delta )\approx 0$. In fact,
\[
\lim_{n\rightarrow\infty}<x\delta_{n}, h_{k}> =(xh_{k})(0)=0.
\]
 \end{example}

\noindent Finally, we are interested in the relation between the
symmetric Hermite product of tempered distributions via Hermite
expansions and the product of tempered generalized functions. The
symmetric Hermite product of tempered distribution was introduced
by C. Shen and M. Sun in \cite{Shen-Sun} based on our previous
work \cite{calo}.
\begin{definition}\label{MulI}
Let $S$ and $T$ be tempered distributions. Suppose that for all
$k\in \mathbb{N}\cup\{0\}$ there exists
\[
c_k =\lim_{n\rightarrow\infty}<T_nS_n ,h_k>
\]
and that $(c_k) \in \mathbf{s}^{\prime}$. The symmetric Hermite
product of $S$ and $T$, denoted by $S \bullet T$, is defined to be
the tempered distribution
\begin{equation}\label{pro3}
\sum_{k=0}^{\infty}c_k h_k.
\end{equation}
\end{definition}
\begin{lemma} $\mathbf{a)}$ The symmetric Hermite
product is commutative and distributive.

\noindent $\mathbf{b)}$ The symmetric Hermite product verifies the
Leibnitz rule: Let $S$ and $T$ be in $\mathcal{S}^{\prime}$, then
\[
D(S \bullet T)=DS \bullet T +S \bullet DT.
\]
\noindent $\mathbf{c)}$ Let $S$ and $T$ be in
$\mathcal{S}^{\prime}$ such that there exists $S \bullet T$. Then
\[
\iota(S) \cdot \iota(T) \approx \iota(S \bullet T).
\]
\end{lemma}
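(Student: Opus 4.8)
The plan is to prove the three parts in order, with parts (a) and (b) being formal consequences of the definitions and part (c) being the substantive claim. For part (a), commutativity of $S \bullet T$ is immediate since the algebra $\mathcal{S}^{\mathbb{N}_0}$ is commutative, so $T_n S_n = S_n T_n$ and hence the defining limits $c_k = \lim_{n} \langle T_n S_n, h_k\rangle$ coincide. Distributivity will follow by writing $(S+S')_n = S_n + S'_n$ (the partial-sum map is linear) and expanding $(S_n + S'_n)T_n = S_n T_n + S'_n T_n$ inside the pairing, so that the limits add; one must note only that existence of $S\bullet T$ and $S'\bullet T$ gives existence of $(S+S')\bullet T$ with the expected coefficients.

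For part (b), the Leibniz rule, the key observation is the identity $(DT)_n = D(T_n)$ already established in Proposition \ref{includ2}(b), which says the partial-sum truncation commutes with differentiation. The plan is to differentiate the defining series for $S\bullet T$ term by term. Concretely, I would compute the Hermite coefficients of $D(S\bullet T)$, namely $\langle D(S\bullet T), h_k\rangle = -\langle S\bullet T, h_k'\rangle$ after integration by parts, and compare with the coefficients coming from $DS\bullet T + S\bullet DT$. Using $D(S_n T_n) = (DS_n)T_n + S_n(DT_n) = (DS)_n T_n + S_n (DT)_n$, passing $\lim_n$ through the finite-dimensional pairing, and the three-term recurrence relating $h_k'$ to $h_{k\pm 1}$, the two expressions match coefficient by coefficient. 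The care needed here is purely bookkeeping with the recurrence; there is no analytic subtlety since everything is tested against the fixed Schwartz function $h_k$.

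Part (c) is the heart of the lemma and asserts that the pointwise product $\iota(S)\cdot\iota(T) = [S_n T_n]$ is \emph{associated} to the embedded symmetric product $\iota(S\bullet T)$. Unwinding the definition of association, I must show that for every $\varphi \in \mathcal{S}$,
\[
\lim_{n\rightarrow\infty}\langle S_n T_n - (S\bullet T)_n, \varphi\rangle = 0,
\]
where $(S\bullet T)_n = \sum_{j=0}^n c_j h_j$ is the truncation of the symmetric product. The strategy is to expand $\varphi$ in its own Hermite series $\varphi = \sum_k \langle\varphi, h_k\rangle h_k$, reducing the test against $\varphi$ to control of the individual coefficients $\langle S_n T_n, h_k\rangle$, which converge to $c_k$ by the very definition of $S\bullet T$. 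The main obstacle — and the step requiring genuine estimation rather than formal manipulation — is justifying the interchange of the limit in $n$ with the infinite sum over $k$ coming from the Hermite expansion of $\varphi$.

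I would handle this interchange by a dominated-convergence argument in the sequence space $\mathbf{s}^{\prime}$. Since $(f_n) := (S_n T_n) \in \mathcal{H}_{\mathbf{s}^{\prime}}$ by Lemma \ref{lema}, the Hermite coefficients $\langle S_n T_n, h_k\rangle$ are bounded by $C(k+1)^m$ uniformly in an appropriate sense, and the rapid decay $\langle\varphi, h_k\rangle = O((k+1)^{-M})$ for every $M$ provides a summable dominating sequence independent of $n$. This lets me split the pairing $\langle S_n T_n - (S\bullet T)_n, \varphi\rangle$ into a finite head, where termwise convergence $\langle S_n T_n, h_k\rangle \to c_k$ applies directly, and a tail made uniformly small by the decay of the $\varphi$-coefficients against the polynomial growth of the product coefficients. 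Once the interchange is secured, the limit equals $\sum_k (c_k - c_k)\langle\varphi, h_k\rangle = 0$, which is exactly the required association.
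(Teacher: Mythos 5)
Your parts (a) and (b) are correct and are essentially the same formal manipulations that the paper delegates to \cite{calo}: commutativity and distributivity are pointwise facts about $\mathcal{S}^{\mathbb{N}_0}$ and linearity of truncation, and the Leibniz rule follows from $(DT)_n = D(T_n)$ (Proposition \ref{includ2}) together with the recurrence for $h_k^{\prime}$. (One small caveat, inherited from the statement itself: splitting $\lim_n\langle D(S_nT_n),h_k\rangle$ into $\lim_n\langle (DS)_nT_n,h_k\rangle + \lim_n\langle S_n(DT)_n,h_k\rangle$ presupposes that $DS\bullet T$ and $S\bullet DT$ exist individually; existence of $S\bullet T$ only yields existence of their sum.)

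Part (c) is where your argument has a genuine gap. You must show $\lim_n\langle S_nT_n,\varphi\rangle=\sum_k c_k\langle\varphi,h_k\rangle$ for every $\varphi\in\mathcal{S}$, and you propose to interchange $\lim_n$ with $\sum_k$ by domination, claiming that Lemma \ref{lema} makes the coefficients $\langle S_nT_n,h_k\rangle$ bounded by $C(k+1)^m$ ``uniformly'', giving a dominating sequence independent of $n$. That is not what membership in $\mathcal{H}_{\mathbf{s}^{\prime}}$ gives. It gives: for each $m$ there exist $C_m, p_m$ with $\|S_nT_n\|_m\le C_m(n+1)^{p_m}$, hence
\[
|\langle S_nT_n,h_k\rangle|\le (k+1)^{-m}\|S_nT_n\|_m\le C_m(k+1)^{-m}(n+1)^{p_m},
\]
a bound that degrades as $n\to\infty$; while Definition \ref{MulI} gives, for each \emph{fixed} $k$, boundedness of $(\langle S_nT_n,h_k\rangle)_n$ in $n$, but with no control on how that bound grows in $k$. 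Neither fact, nor their combination, produces domination independent of $n$, and the defect is not cosmetic: bounds of exactly the available type are compatible with failure of the interchange. For instance, the array $a_{n,k}=n^{p}$ if $k=\lfloor\log n\rfloor$ and $a_{n,k}=0$ otherwise satisfies $a_{n,k}\to 0$ for each fixed $k$ and $|a_{n,k}|(k+1)^m\le C_m(n+1)^{p+1}$ for every $m$, yet $\sum_k a_{n,k}d_k\ge 1$ for all $n$ when tested against the rapidly decreasing sequence $d_k=e^{-pk}$. So your head/tail split cannot close with only the inputs you cite: the tail is not uniformly small in $n$. What is missing is some form of equicontinuity or pointwise boundedness of $(S_nT_n)$ as a family in $\mathcal{S}^{\prime}$ (Banach--Steinhaus would then upgrade coefficientwise convergence to the required weak convergence), or an exploitation of the special structure of the triple products $\int h_ih_jh_k$. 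To be fair, the paper itself dismisses (c) as ``immediate from the definitions,'' silently skipping exactly the step you flagged; your instinct about where the real estimation lies is right, but the estimate you offer is not available.
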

\begin{proof}
$\mathbf{a)}$, $\mathbf{b)}$ The proofs are straightforward (see
\cite{calo}).

\noindent $\mathbf{c)}$ It is immediate from the definitions.

\end{proof}

\begin{remark} In order to work with ordinary differential equations
in the tempered generalized  functions setting, we introduce the
algebra $\mathcal{H}_T$ of time dependent tempered generalized
functions. We can proceed in a similar way to the construction of
the algebra $\mathcal{H}$, the details are left to the reader. Let
$\mathcal{S}_T$ be the set of functions $f:[0,T]\times\mathbb{R}
\rightarrow \mathbb{R}$ such that for each $t \in [0,T]$,
$f(t,\cdot) \in \mathcal{S}$ and for each $x \in \mathbb{R}$,
$f(\cdot, x) \in C^1([0,T])$. The set
$\mathcal{H}^T_{\mathbf{s}^{\prime}}$ is given by
\[
\{(f_{n})\in \mathcal{S}_T^{\mathbb{N}_0} \ : \ \mbox{for each $m
\in \mathbb{N}_0$,} \ (\sup_{t \in [0,T]}\| f_{n}(t, \cdot)
\|_{m}),\ (\sup_{t \in [0,T]}\| \frac{\partial f_{n}}{\partial
t}(t, \cdot) \|_{m})\in \mathbf{s}^{\prime} \ \}
\]
and the set $\mathcal{H}^T_{\mathbf{s}}$ is given by
\[
\{(f_{n})\in \mathcal{S}_T^{\mathbb{N}_0} \ : \ \mbox{for each $m
\in \mathbb{N}_0$,} \ (\sup_{t \in [0,T]}\| f_{n}(t, \cdot)
\|_{m}),\ (\sup_{t \in [0,T]}\| \frac{\partial f_{n}}{\partial
t}(t, \cdot) \|_{m})\in \mathbf{s} \  \}.
\]
It is clear that $\mathcal{H}^T_{\mathbf{s}}$ is a differentiable
ideal of the algebra $\mathcal{H}^T_{\mathbf{s}^{\prime}}$. We
define the algebra $\mathcal{H}_T$ as
$\mathcal{H}^T_{\mathbf{s}^{\prime}}/\mathcal{H}^T_{\mathbf{s}}$.
The elements of $\mathcal{H}_T$ are called time depended tempered
generalized functions. It follows immediately that for $[f_n] \in
\mathcal{H}_T$ we have that $\frac{\partial}{\partial t}[f_n(t,
\cdot )]$ is define by $[\frac{\partial}{\partial t}f_n (t, \cdot )]
\in \mathcal{H}$.

\end{remark}
\section{Tempered numbers and point values}

\begin{definition}\label{genseq} The ring of tempered numbers is defined as
\begin{equation}
\mathbf{h}=\mathbf{s}^{\prime}/ \mathbf{s}.
\end{equation}
The elements of $\mathbf{h}$ are called tempered numbers.
\end{definition}
Let $(b_n) \in \mathbf{s}^{\prime}$, we will use $[b_n]$ to denote
the equivalence class $(b_n) + \mathbf{s}$.

\begin{lemma} $\mathbf{a})$ Let $\iota:\mathbb{R}\rightarrow \mathbf{h}$ be the application
\[
\iota(a)=[a].
\]
Then $\iota$ is an embedding.

$\mathbf{b})$ $\mathcal{H}$ is a $\mathbf{h}$-module with the
natural operations.

$\mathbf{c})$ Let $[f_n] \in \mathcal{H}$ and $a \in \mathbb{R}$.
Then $[f_n(a)]\in \mathbf{h}$.
\end{lemma}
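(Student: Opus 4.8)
The plan is first to note that for $a\in\mathbb{R}$ the constant sequence with every entry equal to $a$ is bounded, hence lies in $\mathbf{s}^{\prime}$ (take $C=|a|$ and $m=0$), so $\iota(a)=[a]$ is well defined. Since the operations on $\mathbf{s}^{\prime}$ are pointwise, constant sequences add and multiply entrywise to the constants $a+b$ and $ab$, so $\iota$ is a ring homomorphism carrying $1$ to the unit $[1]$ of $\mathbf{h}$. For injectivity I would show $\iota(a)=0$ forces $a=0$: if $[a]=0$ then the constant sequence lies in $\mathbf{s}$, i.e. $\sum_{n}(n+1)^{2m}|a|^2<\infty$ for all $m$; already for $m=0$ this yields $\sum_n|a|^2<\infty$, which is impossible unless $a=0$. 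Hence $\iota$ is an embedding; this part is purely formal.

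\textbf{Part b).} I would define the action of a tempered number on a tempered generalized function by $[b_n]\cdot[f_n]:=[b_n f_n]$ and verify it is well defined. The basic estimate is $\|b_n f_n\|_m=|b_n|\,\|f_n\|_m$ for every $m$, since $b_n$ is a scalar. If $(b_n)\in\mathbf{s}^{\prime}$ and $(f_n)\in\mathcal{H}_{\mathbf{s}^{\prime}}$, then $|b_n|\le C(n+1)^p$ and $\|f_n\|_m\le D(n+1)^q$, whence $\|b_n f_n\|_m\le CD(n+1)^{p+q}$ and $(b_nf_n)\in\mathcal{H}_{\mathbf{s}^{\prime}}$. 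Passing to the quotient requires two symmetric checks: if $(b_n)\in\mathbf{s}$ while $\|f_n\|_m\le D(n+1)^q$, then $\|b_nf_n\|_m\le D(n+1)^q|b_n|$ and $((n+1)^q|b_n|)\in\mathbf{s}$ because $\sum_n(n+1)^{2(l+q)}|b_n|^2<\infty$ for all $l$, so $(b_nf_n)\in\mathcal{H}_{\mathbf{s}}$; dually, if $(f_n)\in\mathcal{H}_{\mathbf{s}}$ and $|b_n|\le C(n+1)^p$, then $\|b_nf_n\|_m$ is dominated by the rapidly decreasing sequence $C(n+1)^p\|f_n\|_m$. The module axioms, in particular bilinearity, the associativity $([b_n][b_n'])[f_n]=[b_n]([b_n'][f_n])$, and $[1][f_n]=[f_n]$, then follow at once from the pointwise definitions.

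\textbf{Part c).} Here the point value is defined by $[f_n]\mapsto[f_n(a)]$, and the only non-formal input is a Sobolev-type control of the evaluation functional by a single Schwartz norm. Taking $\alpha=\beta=0$ in the equivalence between $\{\|\cdot\|_m\}$ and $\{\|\cdot\|_{\alpha,\beta,\infty}\}$ recalled in Section 2, there exist $m_0\in\mathbb{N}_0$ and $C_0>0$ with
\[
|f_n(a)|\le\sup_{x}|f_n(x)|=\|f_n\|_{0,0,\infty}\le C_0\|f_n\|_{m_0}.
\]
If $(f_n)\in\mathcal{H}_{\mathbf{s}^{\prime}}$ then $\|f_n\|_{m_0}\le D(n+1)^q$, so $|f_n(a)|\le C_0D(n+1)^q$ and $(f_n(a))\in\mathbf{s}^{\prime}$, which shows $[f_n(a)]$ is a tempered number. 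For well-definedness, if $(f_n)\in\mathcal{H}_{\mathbf{s}}$ then $(\|f_n\|_{m_0})\in\mathbf{s}$, and the same inequality shows $(f_n(a))$ is dominated by the rapidly decreasing sequence $(C_0\|f_n\|_{m_0})$, hence $(f_n(a))\in\mathbf{s}$.

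The main obstacle is the step in part c) of pinning down a single index $m_0$ for which evaluation at $a$ is bounded by $\|f_n\|_{m_0}$ uniformly in $n$; everything rests on the equivalence of the two families of seminorms, and once that single norm estimate is in hand both the $\mathbf{s}^{\prime}$-bound and the $\mathbf{s}$-vanishing are immediate. Parts a) and b) are entirely formal consequences of the pointwise algebra structure and of the polynomial-growth-versus-rapid-decrease dichotomy defining $\mathbf{s}^{\prime}$ and $\mathbf{s}$.
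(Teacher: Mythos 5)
Your proof is correct and takes essentially the same approach as the paper: a nonzero constant sequence cannot lie in $\mathbf{s}$, the polynomial-growth estimates give the module structure, and evaluation at $a$ is dominated by a single norm $\|\cdot\|_{m_0}$ uniformly in $n$ --- the paper gets this last bound by citing $\delta_a\in\mathcal{S}^{\prime}$, while you derive it from the seminorm equivalence, which is the same estimate. The additional details you supply (the two symmetric ideal checks in part b) and the well-definedness of $(f_n(a))$ modulo $\mathcal{H}_{\mathbf{s}}$) are steps the paper states without proof or leaves implicit.
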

\begin{proof}
 $\mathbf{a})$ It is clear that $(a) \in \mathbf{s}^{\prime}$, then
$\iota(a)=[a]$ is well defined. Assuming that $\iota(a)=0$, we
have that $(a) \in \mathbf{s}$. In particular $\lim_{n \rightarrow
\infty}na=0$, it follows that $a=0$.

\noindent $\mathbf{b})$ We have divided the proof into two parts.
We first prove that for $(b_n) \in \mathbf{s}^{\prime}$ and $(f_n)
\in \mathcal{H}_{\mathbf{s}^{\prime}}$ we have $(b_nf_n) \in
\mathcal{H}_{\mathbf{s}^{\prime}}$. In fact, by definition there
exists constants $E, F>0$ and $p, q\in \mathbb{N}_0$ such that
\begin{eqnarray*}
\| f_n \|_m & \leq & E(n+1)^p \\
| b_n | & \leq & F(n+1)^q.
\end{eqnarray*}
Combining these inequalities, we obtain
\[
\| b_nf_n \|_m  \leq EF(n+1)^{p+q}.
\]
This proves that $(\| b_nf_n \|_m ) \in \mathbf{s}^{\prime}$, thus
$(b_nf_n) \in \mathcal{H}_{\mathbf{s}^{\prime}}$.

\noindent Finally, the proof is completed by showing that for
$(a_n) \in \mathbf{s}$ and $(f_n) \in
\mathcal{H}_{\mathbf{s}^{\prime}}$ or $(a_n) \in
\mathbf{s}^{\prime}$ and $(f_n) \in \mathcal{H}_{\mathbf{s}}$ we
have $(a_nf_n) \in \mathcal{H}_{\mathbf{s}}$.

\noindent $\mathbf{c})$ Since $\delta_a \in \mathcal{S}^{\prime}$,
there exists a constant $C>0$ and $m \in \mathbb{N}_0$ such that
\[
|f_n(a)|=|\delta_a(f_n)| \leq C\|f_n \|_{m},
\]
for all $n \in \mathbb{N}_0$.

\noindent Combining the above inequality with $(\|f_n \|_{m})\in
\mathbf{s}^{\prime}$ we conclude that $(f_n(a)) \in
\mathbf{s}^{\prime}$.
\end{proof}

\begin{remark}
We observe that $\mathbf{h}$ is not a field, since there exist
zero divisors in  $\mathbf{h}$. In fact, $[1+(-1)^n],
[1+(-1)^{n+1}] \in \mathbf{h}$ are non zero and its product is
zero.
\end{remark}
\begin{definition} The point value of $[f_n]\in \mathcal{H}$ in $a\in
\mathbb{R}$, denoted by $[f_n](a)$, is defined to be $[f_n(a)]$.
\end{definition}
\begin{example} The point value of $\delta$ in $a\in
\mathbb{R}$. From  (\ref{delta1}) we have that
$\iota(\delta)(a)=[a_n]$, where

\begin{equation}\nonumber a_n=
\left \{
\begin{array}{lll}

           \frac{\sqrt{n+1}}{a}h_{n+1}(a)h_{n}(0)& & \mbox{for $n$ even},        \\
           -\frac{\sqrt{n+1}}{a}h_{n}(a)h_{n+1}(0)& & \mbox{for $n$ odd}.
\end{array}
\right .
\end{equation}
\end{example}

\begin{example} The point value of $x_{+}$ in 0. It is easy to check that
$\iota(x_+)(0)=[a_n]$, where

\begin{equation}\nonumber a_n=
\left \{
\begin{array}{lll}

           \sqrt{n+1}h_{n}(0) \int_{0}^{\infty} h_{n+1}(x) \ dx& & \mbox{for $n$ even},        \\
           \sqrt{n}h_{n-1}(0) \int_{0}^{\infty} h_{n}(x) \ dx& & \mbox{for $n$ odd}.
\end{array}
\right .
\end{equation}
\end{example}

\noindent  We introduce the concept of association for tempered
numbers.
\begin{definition}\label{ass2}
Let $[a_n]$ and $[b_n]$ be tempered numbers. We say that $[a_n]$
and $[b_n]$ are associated, denoted by $ [a_n] \approx [b_n]$, if
\[
\lim_{n\rightarrow\infty}(a_n-b_n)=0.
\]
\end{definition}
\noindent We observe that the relation $\approx$ is well defined
and that $\approx$ is an equivalence relation on $\mathbf{h}$.
\begin{example} $\iota(x_+)(0) \approx 0$.
\end{example}

\section{Integration and Fourier transform}

In this section we present the integration theory of tempered
generalized functions and the Fourier transform.
\begin{definition}\label{defin} Let $[f_n]\in
\mathcal{H}$ and $A$ be a Lebesgue measurable set. The integral of
$[f_n]$ on $A$, denoted by $\int_A [f_n](x) \ dx$ is defined to be
\begin{equation}\label{integr}
[\int_Af_{n}(x) \ dx].
\end{equation}
\end{definition}

\noindent We observe that the integral is well defined as an
element of $\mathbf{h}$. In fact, as $1_A$ is a tempered
distribution there exists a constant $C>0$ and $m \in
\mathbb{N}_0$ such that
\[
|\int_{A} g(x)\ dx|\leq C \|g \|_{m},
\]
for all $g \in \mathcal{S}$. In particular, for $[f_n]\in
\mathcal{H}$ we have that
\[
|\int_{A} f_{n}(x)\ dx|\leq C \|f_{n} \|_{m}.
\]
As $(\|f_{n} \|_{m}) \in \mathbf{s}^{\prime}$, we conclude that
$\int_A [f_n](x) \ dx \in \mathbf{h}$.

In the next Lemma we collect some fundamental properties of the
integral of tempered generalized functions.

\begin{lemma} Let $[f_n]$ and $[g_n]$ be tempered
generalized functions, $a=[a_n] \in \mathbf{h}$ and $\alpha\in
\mathbb{N}$. Then

\noindent $\mathbf{a)}$ Let $A$ and $B$ disjoint Lebesgue
measurable sets. Then
\[
\int_{A\cup B} [f_n](x) \ dx= \int_{A} [f_n](x) \ dx + \ \int_{B}
[f_n](x) \ dx.
\]
$\mathbf{b)}$
\[
\int_{A} ([f_n]+a[g_n])(x) \ dx= \int_{A} [f_n](x) \ dx + \ a
\int_{A} [g_n](x) \ dx.
\]

\noindent $\mathbf{c)}$ Let $\varphi\in \mathcal{S}$. Then
\[
\iota(\int_{A} \varphi \ dx)= \int_{A} \iota(\varphi) \ dx.
\]

\noindent $\mathbf{d)}$ "Rule of integration by parts"
\[
\int_{\mathbb{R}}  [f_n] D^{\alpha} [g_n](x) \ dx=(-1)^{\alpha}
\int_{\mathbb{R}}( D^{\alpha}[f_n]) [g_n](x)\ dx.
\]

\noindent $\mathbf{e)}$ Let $\varphi\in \mathcal{S}$ and $T \in
\mathcal{S}^{\prime}$. Then
\[
\int_{\mathbb{R}} \iota(T) \cdot \iota (\varphi)(x) \ dx =
\iota(T(\varphi)).
\]
\end{lemma}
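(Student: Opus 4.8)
The plan is to reduce each identity to a statement about the representing sequences, invoke the definition $\int_A [f_n]\,dx=[\int_A f_n\,dx]$, and exploit the elementary properties of the ordinary Lebesgue integral on $\mathcal{S}$. Parts $\mathbf{a)}$ and $\mathbf{b)}$ are the easiest: for each fixed $n$ the functions $f_n,g_n$ lie in $\mathcal{S}$, so classical additivity $\int_{A\cup B}f_n=\int_A f_n+\int_B f_n$ (for disjoint $A,B$) and linearity hold termwise. Writing $a=[a_n]$ and using the $\mathbf{h}$-module structure $a[g_n]=[a_n g_n]$, one passes to equivalence classes in $\mathbf{h}$ to conclude; the only remark needed is that all integrals are well defined elements of $\mathbf{h}$, which was verified just before the lemma.

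For $\mathbf{c)}$ the key is Proposition \ref{includ2}$\mathbf{a)}$, giving $\iota(\varphi)=[\varphi]$, the class of the constant sequence. Then $\int_A\iota(\varphi)\,dx=\int_A[\varphi]\,dx=[\int_A\varphi\,dx]$, and since $\int_A\varphi\,dx$ is a real number, its image under the embedding $\mathbb{R}\to\mathbf{h}$ is exactly that constant class; no estimate is required. Part $\mathbf{d)}$ is integration by parts carried out termwise: the integrand on the left is represented by $f_n D^\alpha g_n$ and on the right by $(D^\alpha f_n)g_n$, and for each fixed $n$ both $f_n,g_n\in\mathcal{S}$, so all boundary terms vanish at $\pm\infty$ and
\[
\int_{\mathbb{R}} f_n D^\alpha g_n\,dx=(-1)^\alpha\int_{\mathbb{R}}(D^\alpha f_n)g_n\,dx
\]
for every $n$; passing to the class in $\mathbf{h}$ yields the assertion.

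The main work is $\mathbf{e)}$. Representing $\iota(\varphi)$ by $[\varphi]$ and $\iota(T)$ by $[T_n]$ with $T_n=\sum_{j=0}^n T(h_j)h_j$, the module product gives $\iota(T)\cdot\iota(\varphi)=[T_n\varphi]$, so by definition $\int_{\mathbb{R}}\iota(T)\cdot\iota(\varphi)\,dx=[\int_{\mathbb{R}}T_n\varphi\,dx]$. Using orthonormality of the Hermite functions,
\[
\int_{\mathbb{R}}T_n\varphi\,dx=\sum_{j=0}^n T(h_j)\langle\varphi,h_j\rangle,
\]
which is precisely the $n$-th partial sum of the series $\sum_{j=0}^\infty T(h_j)\langle\varphi,h_j\rangle=T(\varphi)$ furnished by Theorem \ref{rpreS}$\mathbf{b)}$. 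Hence the desired equality $\int_{\mathbb{R}}\iota(T)\cdot\iota(\varphi)\,dx=\iota(T(\varphi))=[T(\varphi)]$ reduces to showing that the tail sequence
\[
r_n=T(\varphi)-\sum_{j=0}^n T(h_j)\langle\varphi,h_j\rangle=\sum_{j=n+1}^\infty T(h_j)\langle\varphi,h_j\rangle
\]
lies in $\mathbf{s}$.

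This last step is the crux. I would combine the bound $|T(h_j)|\le C(j+1)^p$, valid since $\mathbf{H}(T)\in\mathbf{s}'$, with the rapid decay $|\langle\varphi,h_j\rangle|\le C_k(j+1)^{-k}$ for every $k$, valid since $\mathbf{h}(\varphi)\in\mathbf{s}$, to obtain $|T(h_j)\langle\varphi,h_j\rangle|\le CC_k(j+1)^{p-k}$. Summing the tail and choosing $k$ large relative to a prescribed $m$ then gives $(n+1)^m|r_n|\to 0$, so $(r_n)\in\mathbf{s}$ and the two classes coincide in $\mathbf{h}$. The point to handle carefully is making the tail estimate uniform enough to beat an arbitrary polynomial weight $(n+1)^m$; this is exactly what the gap between the polynomial growth of $(T(h_j))$ and the super-polynomial decay of $(\langle\varphi,h_j\rangle)$ supplies. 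I emphasize that here one obtains genuine equality in $\mathbf{h}$, not merely association, precisely because this gap forces rapid decay of $(r_n)$ rather than only $r_n\to 0$.
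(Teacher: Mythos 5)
Your proposal is correct, and its overall structure matches the paper's: parts a)--c) are handled termwise from the definition of the integral (the paper simply declares them immediate), part d) is the same termwise integration by parts on representatives, and for part e) both you and the paper reduce the claim to showing that the tail $r_n=\sum_{j=n+1}^{\infty}T(h_j)\langle\varphi,h_j\rangle$ belongs to $\mathbf{s}$, exploiting the gap between the polynomial growth of $(T(h_j))$ and the super-polynomial decay of $(\langle\varphi,h_j\rangle)$. Where you genuinely diverge is in the execution of that tail estimate, and your version is the sounder one. You bound each term by $|T(h_j)\langle\varphi,h_j\rangle|\le C C_k (j+1)^{p-k}$ using the sup-norm form of rapid decay, then choose $k$ large relative to the prescribed weight $m$; this immediately gives $(n+1)^m|r_n|\to 0$. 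The paper instead passes from $\sum_{j>n}C(j+1)^p|\langle\varphi,h_j\rangle|$ to $\sum_{j>n}C(j+1)^{2p}|\langle\varphi,h_j\rangle|^2$, a termwise inequality that requires $(j+1)^p|\langle\varphi,h_j\rangle|\ge 1$ and therefore fails for all large $j$ precisely because the coefficients of $\varphi$ decay rapidly; the paper's conclusion survives (the step can be repaired by Cauchy--Schwarz against a convergent series $\sum (j+1)^{-2}$, after which the same absorption of $(n+1)^r$ into the tail works), but as written that link in the chain is backwards. So your approach buys a cleaner and strictly correct estimate, at the mild cost of invoking the equivalence between the $\ell^2$-weighted norms and the sup-norms $|\cdot|_{m,\infty}$ on $\mathbf{s}$, while the paper's $\ell^2$ formulation keeps everything in the norms used to define $\mathcal{H}_{\mathbf{s}}$ but needs the Cauchy--Schwarz repair to be rigorous.
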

\begin{proof}
The proof of $\mathbf{a)}$, $\mathbf{b)}$ and $\mathbf{c)}$ are
immediate.

\noindent $\mathbf{d)}$ We have that
\begin{eqnarray*}
\int_{\mathbb{R}}  [f_n] D^{\alpha} [g_n](x) \ dx & = &
[\int_{\mathbb{R}}  f_n D^{\alpha} g_n(x) \ dx] \\
& = & [\int_{\mathbb{R}} (-1)^{\alpha} D^{\alpha}f_n (x) g_n(x) \
dx] \\ & = & (-1)^{\alpha} \int_{\mathbb{R}}( D^{\alpha}[f_n])
[g_n](x)\ dx.
\end{eqnarray*}
\noindent $\mathbf{e)}$ By definitions
\begin{eqnarray*}
\int_{\mathbb{R}} \iota(T) \cdot \iota (\varphi)(x) \ dx & = &
[\int_{\mathbb{R}} T_n(x) \varphi(x) \ dx]
\end{eqnarray*}
Let us prove that $(T(\varphi)-\int T_n(x) \varphi(x) \ dx)\in
\mathbf{s}$. Combining definitions, the  N-representation theorem
 and $\lim_{n \rightarrow \infty}<\varphi,h_j>=0$,
we obtain that there exists $n_0 \in \mathbb{N}$ such that if $n
\geq n_0$,
\begin{eqnarray*}
|T( \varphi)-\int_{\mathbb{R}} T_{n}(x) \varphi(x) \ dx| & = &
|T-T_{n}(
\varphi)| \\
& \leq & \sum_{j=n+1}^{\infty} |T(h_j)| |<\varphi,h_j>| \\
& \leq & \sum_{j=n+1}^{\infty} C(j+1)^p |<\varphi,h_j>| \\
& \leq & \sum_{j=n+1}^{\infty} C(j+1)^{2p} |<\varphi,h_j>|^2.
\end{eqnarray*}
Since $\varphi \in \mathcal{S}$, it follows that
\[
\lim_{n \rightarrow \infty} \sum_{j=n+1}^{\infty} (j+1)^{q}
|<\varphi,h_j>|^2=0
\]
for all $q \in \mathbb{N}_0$. Combining the above inequalities we
see that
\[
\lim_{n \rightarrow \infty}(n+1)^{r}|T( \varphi)-\int_{\mathbb{R}}
T_{n}(x) \varphi(x) \ dx|=0,
\]
for all $r \in \mathbb{N}_0$. This proves that $(T( \varphi)-\int
T_n(x) \varphi(x) \ dx)\in \mathbf{s}$, which completes the proof.
\end{proof}

\begin{example} Let $T\in \mathcal{S}^{\prime}$. Then
\begin{eqnarray*}
\int_{\mathbb{R}} \iota (T)(x) \ dx & = & [\int_{\mathbb{R}}
T_n(x) \ dx] \\
& = & [1( T_n)] \\
& = & [\sum_{ j \ even }^{n} T(h_j)h_j(0)(-1)^{\frac{j}{2}}],
\end{eqnarray*}
where the last equality follows from formula (\ref{uno}).
\end{example}

\begin{example} $\delta^{2}$. From formula (\ref{deltaquadrado}) we have that
\begin{eqnarray*}
\int_{\mathbb{R}} \delta^{2}(x) \ dx & = & [\int_{\mathbb{R}}
\delta_n^{2}(x) \ dx] \\
& = & [ \frac{(n+1)}{\sqrt{2\pi}}
\frac{1}{2}\frac{3}{4}\cdots\frac{n-1}{n}].
\end{eqnarray*}
\end{example}
\noindent The Fourier transform and the convolution are very
important tools of classical and modern analysis, our aim is
introduce these operations in the context of tempered generalized
functions. We recall that the Fourier transform
$\mathcal{F}:\mathcal{S}\rightarrow\mathcal{S}$ is defined by
\[
\mathcal{F}(\varphi)(t)=\frac{1}{\sqrt{2\pi}}\int_{\mathbb{R}}e^{-itx}\varphi(x)
\; dx
\]
and the convolution $\ast: \mathcal{S} \times \mathcal{S}
\rightarrow\mathcal{S}$ is defined by
\[
\varphi \ast \psi (t) = \int_{\mathbb{R}}\varphi(t-x)\psi(x) \;
dx.
\]
For a fuller treatment of the issues discussed below,  we refer the reader to
\cite{Red2}.
\begin{definition} The Fourier transform of a generalized tempered function $[f_n]$, denoted by $\mathcal{F}([f_n])$,
is defined to be $[\mathcal{F}(f_n)]$.
\end{definition}
\noindent We observe that the above definition is independent of
the representatives, because for all $m \in \mathbb{N}_0$ and
$\varphi \in \mathcal{S}$ we have that $\|\mathcal{F}(\varphi)\|_m
= \| \varphi \|_m$.

\noindent Here are some elementary properties of the Fourier
transform and convolution.

\begin{theorem}
$\mathbf{a)}$ The Fourier transform $\mathcal{F}: \mathcal{H}
\rightarrow\mathcal{H}$ is a linear isomorphism and its inverse is
given by
\[
\mathcal{F}^{-1}([f_n])=[\mathcal{F}^{-1}(f_n)].
\]

\noindent $\mathbf{b)}$ Let $[f_n] \in \mathcal{H}$ and $\alpha
\in \mathbb{N}_0$. Then
\begin{eqnarray*}
\mathcal{F}(D^{\alpha}[f_n]) & = & (ix)^{\alpha}\mathcal{F}([f_n])
\\
\mathcal{F}(x^{\alpha}[f_n]) & = & i^{\alpha}D^{\alpha}
\mathcal{F}([f_n]).
\end{eqnarray*}

\noindent $\mathbf{c)}$ Let $T$ be a tempered distribution. Then
$\iota(\mathcal{F}(T))= \mathcal{F}(\iota (T))$.

\end{theorem}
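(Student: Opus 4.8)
The plan is to reduce every assertion to the corresponding classical statement on $\mathcal{S}$ (or $\mathcal{S}^{\prime}$), carried out representative-wise, and then to pass to equivalence classes. The single fact that makes this reduction legitimate is the isometry $\|\mathcal{F}(\varphi)\|_m=\|\varphi\|_m$ recorded just before the theorem: it guarantees that $\mathcal{F}$, applied componentwise to a sequence, changes none of the norms $\|f_n\|_m$, and therefore maps $\mathcal{H}_{\mathbf{s}^{\prime}}$ into itself and $\mathcal{H}_{\mathbf{s}}$ into itself. Consequently $[f_n]\mapsto[\mathcal{F}(f_n)]$ is a well-defined map $\mathcal{H}\to\mathcal{H}$, independent of the chosen representative, and it is linear because $\mathcal{F}$ is linear on $\mathcal{S}$.

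For part $\mathbf{a)}$ I would first observe that the inverse Fourier transform $\mathcal{F}^{-1}:\mathcal{S}\to\mathcal{S}$ enjoys the same isometry $\|\mathcal{F}^{-1}(\varphi)\|_m=\|\varphi\|_m$ (it differs from $\mathcal{F}$ only by a reflection of the variable, which leaves each norm unchanged), so that $[f_n]\mapsto[\mathcal{F}^{-1}(f_n)]$ is likewise a well-defined linear endomorphism of $\mathcal{H}$. Since $\mathcal{F}^{-1}\mathcal{F}=\mathcal{F}\mathcal{F}^{-1}=\mathrm{id}$ holds identically on $\mathcal{S}$, applying these identities in each coordinate gives $\mathcal{F}^{-1}\mathcal{F}[f_n]=[f_n]=\mathcal{F}\mathcal{F}^{-1}[f_n]$; hence $\mathcal{F}$ is bijective with the stated inverse. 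This settles $\mathbf{a)}$.

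Part $\mathbf{b)}$ is immediate from the classical differentiation rules for the Fourier transform on $\mathcal{S}$, namely $\mathcal{F}(D^{\alpha}\varphi)=(ix)^{\alpha}\mathcal{F}(\varphi)$ and $\mathcal{F}(x^{\alpha}\varphi)=i^{\alpha}D^{\alpha}\mathcal{F}(\varphi)$. Each operation occurring in the two identities---the derivative $D^{\alpha}$, multiplication by the multiplier $x^{\alpha}\in\mathcal{O}_M$, and $\mathcal{F}$ itself---is defined on $\mathcal{H}$ by acting on representatives, so applying the classical identity to every $f_n$ and then taking classes yields both formulas verbatim.

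For part $\mathbf{c)}$ I would unwind both sides against the Hermite basis. The Hermite functions are eigenfunctions of $\mathcal{F}$, say $\mathcal{F}(h_j)=\lambda_j h_j$ with $|\lambda_j|=1$ (the classical values being $\lambda_j=(-i)^{j}$); this is precisely the relation behind the isometry $\|\mathcal{F}(\varphi)\|_m=\|\varphi\|_m$ exploited above. By the definition of the Fourier transform on $\mathcal{S}^{\prime}$ one has $(\mathcal{F}T)(h_j)=T(\mathcal{F}h_j)=\lambda_j T(h_j)$, so $\iota(\mathcal{F}(T))=[(\mathcal{F}T)_n]$ with $(\mathcal{F}T)_n=\sum_{j=0}^{n}\lambda_j T(h_j)h_j$; on the other hand $\mathcal{F}(\iota(T))=[\mathcal{F}(T_n)]$ with $\mathcal{F}(T_n)=\sum_{j=0}^{n}T(h_j)\mathcal{F}(h_j)=\sum_{j=0}^{n}\lambda_j T(h_j)h_j$. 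The two partial sums coincide for every $n$, so the identity holds already at the level of $\mathcal{H}_{\mathbf{s}^{\prime}}$, a fortiori in $\mathcal{H}$. The only nonroutine ingredient---and the step I would be most careful about---is the eigenfunction relation together with the matching of the normalizations of $h_j$ and of $\mathcal{F}$; it is exactly this that forces equality, rather than mere association, in $\mathbf{c)}$.
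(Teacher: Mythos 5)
Your proposal is correct and follows essentially the same route as the paper: well-definedness of $\mathcal{F}$ and of $\mathcal{G}([f_n])=[\mathcal{F}^{-1}(f_n)]$ via the isometry $\|\mathcal{F}(\varphi)\|_m=\|\varphi\|_m$, inverse and identities verified componentwise, and parts $\mathbf{b)}$, $\mathbf{c)}$ reduced to the classical identities on $\mathcal{S}$ applied to each representative. The paper disposes of $\mathbf{b)}$ and $\mathbf{c)}$ with a one-line appeal to ``definitions and properties of the Fourier transform in $\mathcal{S}$''; your explicit use of the eigenfunction relation $\mathcal{F}(h_j)=(-i)^j h_j$ in $\mathbf{c)}$, which yields equality of representatives $(\mathcal{F}T)_n=\mathcal{F}(T_n)$ rather than mere association, is exactly the detail the paper leaves implicit and is consistent with its own computation of $\mathcal{F}(\iota(\delta))$.
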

\begin{proof}
\noindent $\mathbf{a)}$ Define $\mathcal{G}: \mathcal{H}
\rightarrow \mathcal{H}$ by
$\mathcal{G}([f_n])=[\mathcal{F}^{-1}(f_n)]$. We observe that
$\mathcal{G}$ is well defined, because for any $m \in
\mathbb{N}_0$ and $\varphi \in \mathcal{S}$ we have $\|\varphi\|_m
=\| \mathcal{F}^{-1} \varphi\|_m$.  It is clear that
$\mathcal{F}\circ \mathcal{G} = I_{\mathcal{H}}$ and
$\mathcal{G}\circ \mathcal{F} = I_{\mathcal{H}}$.

\noindent $\mathbf{b)}$ and  $\mathbf{c)}$. The proofs follows
from the definitions and properties of the Fourier transform in
$\mathcal{S}$.
\end{proof}
\begin{definition} Let $[f_n]$ and $[g_n]$ be  tempered generalized  functions. The convolution of $[f_n]$ and $[g_n]$,
denoted by $[f_n]\ast[g_n]$, is defined to be $\mathcal{F}^{-1}(
\sqrt{2 \pi} \mathcal{F}([f_n]) \cdot \mathcal{F}([g_n]))$.
\end{definition}
\begin{theorem}
\noindent $\mathbf{a)}$ Let $[f_n], [g_n] \in \mathcal{H}$. Then
\[
[f_n]\ast[g_n]=[f_n \ast g_n].
\]
\noindent $\mathbf{b)}$ Let $[f_n], [g_n], [h_n] \in \mathcal{H}$
and $\alpha \in \mathbb{N}_0$. Then
\begin{eqnarray*}
[f_n]\ast[g_n] & = & [g_n]\ast[f_n] \\
D^{\alpha}([f_n]\ast[g_n]) & = & (D^{\alpha}[f_n]) \ast[g_n] \\
([f_n]\ast[g_n]) \ast [h_n] & = & [f_n] \ast ([g_n] \ast [h_n])
\end{eqnarray*}

\noindent $\mathbf{c)}$  Let $[f_n], [g_n] \in \mathcal{H}$. Then
\begin{eqnarray*}
\mathcal{F}([f_n]\cdot[g_n]) & = & \frac{1}{\sqrt{2 \pi}}
\mathcal{F}([f_n]) \ast \mathcal{F}([g_n]) \\
\mathcal{F}([f_n]\ast[g_n]) & = & \sqrt{2 \pi} \mathcal{F}([f_n])
\cdot \mathcal{F}([g_n])
\end{eqnarray*}

\noindent $\mathbf{d)}$ Let $T\in \mathcal{S}^{\prime}$ and
$\varphi \in \mathcal{S}$. Then
\[
\iota(T) \ast\iota(\varphi)\approx\iota(T\ast \varphi).
\]
\end{theorem}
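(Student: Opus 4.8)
The plan is to treat parts $\mathbf{a)}$--$\mathbf{c)}$ as essentially formal consequences of the definition of the convolution combined with the classical Fourier and convolution identities in $\mathcal{S}$ applied coordinatewise, reserving the genuine work for part $\mathbf{d)}$, which is an association statement demanding an honest limit argument.

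For $\mathbf{a)}$ I would simply unfold the definitions. By the definition of the convolution on $\mathcal{H}$ and of $\mathcal{F},\mathcal{F}^{-1}$,
\[
[f_n]\ast[g_n]=\mathcal{F}^{-1}\big(\sqrt{2\pi}\,\mathcal{F}([f_n])\cdot\mathcal{F}([g_n])\big)=\big[\mathcal{F}^{-1}\big(\sqrt{2\pi}\,\mathcal{F}(f_n)\mathcal{F}(g_n)\big)\big].
\]
Since each $f_n,g_n\in\mathcal{S}$, the classical convolution theorem in $\mathcal{S}$ for the normalization fixed above gives $\mathcal{F}(f_n\ast g_n)=\sqrt{2\pi}\,\mathcal{F}(f_n)\mathcal{F}(g_n)$, whence $\mathcal{F}^{-1}\big(\sqrt{2\pi}\,\mathcal{F}(f_n)\mathcal{F}(g_n)\big)=f_n\ast g_n$ for every $n$, and therefore $[f_n]\ast[g_n]=[f_n\ast g_n]$. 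Membership $(f_n\ast g_n)\in\mathcal{H}_{\mathbf{s}^{\prime}}$ is automatic, since the right-hand side is built from $\mathcal{F}$, $\mathcal{F}^{-1}$ and the product, each preserving $\mathcal{H}_{\mathbf{s}^{\prime}}$ by the isometry $\|\mathcal{F}\varphi\|_m=\|\varphi\|_m$ and Lemma \ref{lema}.

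Parts $\mathbf{b)}$ and $\mathbf{c)}$ then reduce to $\mathbf{a)}$ plus the corresponding coordinatewise identities in $\mathcal{S}$. For $\mathbf{b)}$, commutativity, the derivative rule $D^{\alpha}(f_n\ast g_n)=(D^{\alpha}f_n)\ast g_n$, and associativity of $\ast$ all hold for genuine Schwartz functions; rewriting every convolution as $[\,\cdot\ast\cdot\,]$ via $\mathbf{a)}$ and using $D^{\alpha}[f_n]=[D^{\alpha}f_n]$ yields the three equalities in $\mathcal{H}$. For $\mathbf{c)}$, the second identity is immediate: applying $\mathcal{F}$ to the definition of $[f_n]\ast[g_n]$ and using $\mathcal{F}\circ\mathcal{F}^{-1}=I_{\mathcal{H}}$ (established above) gives $\mathcal{F}([f_n]\ast[g_n])=\sqrt{2\pi}\,\mathcal{F}([f_n])\cdot\mathcal{F}([g_n])$. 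The first identity follows from $\mathbf{a)}$ and the classical relation $\mathcal{F}(f_ng_n)=\tfrac{1}{\sqrt{2\pi}}\mathcal{F}(f_n)\ast\mathcal{F}(g_n)$ in $\mathcal{S}$: one computes $\mathcal{F}([f_n]\cdot[g_n])=[\mathcal{F}(f_ng_n)]=\tfrac{1}{\sqrt{2\pi}}[\mathcal{F}(f_n)\ast\mathcal{F}(g_n)]$ and recognizes the bracket, via $\mathbf{a)}$ applied to $\mathcal{F}([f_n])=[\mathcal{F}(f_n)]$ and $\mathcal{F}([g_n])=[\mathcal{F}(g_n)]$, as $\mathcal{F}([f_n])\ast\mathcal{F}([g_n])$.

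The heart of the theorem is $\mathbf{d)}$. Using $\mathbf{a)}$ together with $\iota(\varphi)=[\varphi]$ (Proposition \ref{includ2}) I would write $\iota(T)\ast\iota(\varphi)=[T_n]\ast[\varphi]=[T_n\ast\varphi]$, where $T_n=\sum_{j=0}^{n}T(h_j)h_j$, while $\iota(T\ast\varphi)=[(T\ast\varphi)_n]$. By Definition \ref{ass1} it then suffices to show, for every $\psi\in\mathcal{S}$,
\[
\lim_{n\to\infty}\langle T_n\ast\varphi-(T\ast\varphi)_n,\psi\rangle=0.
\]
I would treat the two terms separately. For the second, the N-representation theorem (Theorem \ref{rpreS}) applied to the tempered distribution $T\ast\varphi$ gives weak convergence of its Hermite partial sums, so $\langle (T\ast\varphi)_n,\psi\rangle\to\langle T\ast\varphi,\psi\rangle$. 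For the first, the transpose identity $\langle T_n\ast\varphi,\psi\rangle=\langle T_n,\check\varphi\ast\psi\rangle$ with $\check\varphi(x)=\varphi(-x)$ — valid here by Fubini because $T_n\in\mathcal{S}$ — reduces matters to the fixed test function $\check\varphi\ast\psi\in\mathcal{S}$; since Theorem \ref{rpreS} also yields $T_n\to T$ weakly, $\langle T_n,\check\varphi\ast\psi\rangle\to\langle T,\check\varphi\ast\psi\rangle=\langle T\ast\varphi,\psi\rangle$. Subtracting the two limits gives $0$, proving the association. The main obstacle is precisely this last step: one must verify the transpose relation and confirm that $\check\varphi\ast\psi$ is a legitimate fixed element of $\mathcal{S}$, so that the weak convergence $T_n\to T$ supplied by the N-representation theorem can be applied; everything else is bookkeeping with the definitions and the classical identities in $\mathcal{S}$.
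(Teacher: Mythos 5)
Your proposal is correct and follows essentially the same route as the paper: parts $\mathbf{a)}$--$\mathbf{c)}$ are unwound coordinatewise from the definition of $\ast$ and the classical identities in $\mathcal{S}$, and part $\mathbf{d)}$ is proved by showing both $\langle T_n\ast\varphi,\psi\rangle$ and $\langle (T\ast\varphi)_n,\psi\rangle$ converge to $T\ast\varphi(\psi)$. Your only addition is welcome detail the paper leaves implicit, namely the transpose identity $\langle T_n\ast\varphi,\psi\rangle=\langle T_n,\check\varphi\ast\psi\rangle$ together with weak convergence $T_n\to T$ from the N-representation theorem, which justifies the first limit that the paper merely asserts.
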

\begin{proof}
\noindent $\mathbf{a)}$ The proof is a consequence of the above
theorem and definitions.

\noindent $\mathbf{b)}$ and  $\mathbf{c)}$ The proofs follows from
the definitions and properties of the convolution in
$\mathcal{S}$.

\noindent $\mathbf{d)}$  We have that for all $\psi \in
\mathcal{S}$,
\begin{eqnarray*}
\lim_{n\rightarrow \infty}< (T_{n}\ast\varphi), \psi >
& = & T\ast\varphi(\psi) \\
& = & \lim_{n\rightarrow \infty}< (T \ast \varphi)_n , \psi>.
\end{eqnarray*}
This shows that $\iota(T) \ast\iota(\varphi)\approx\iota(T\ast
\varphi)$.
\end{proof}
\begin{example} The Fourier  transform of $\delta$. By formula (\ref{delta}) we
have that
\[
\mathcal{F}(\iota (\delta))=[\sum_{k=0}^{n} (-i)^{k} h_{k}(0)
h_{k}(x)].
\]
\end{example}

\section{Generalized Stochastic Calculus}

\noindent Let $(\Omega, \mathcal{F}, \{ \mathcal{F}_t: t \in [0,T]
\}, \mathbb{P})$ be a filtered probability space, which satisfies
the usual hypotheses. For a recent account of stochastic calculus
and notations we refer the reader to the book of  Ph. Protter
\cite{Protter}.

\begin{definition}
Let $[f_n] \in \mathcal{H}$, $X$ be a continuous jointly
measurable process and $V$ be a finite variation process. We
define the integral of $[f_n](X)$ in relation to $V$ from 0 to
$t$, denoted by $\int_0^t[f_n](X_s)dV_s$, to be
\[
[\int_0^tf_n(X_s)dV_s].
\]
\end{definition}
\noindent It is clear that for each $\omega \in \Omega$ and $t \in
[0,T]$ we have that $[\int_0^tf_n(X_s)dV_s(\omega)] \in
\mathbf{h}$, because
\[
|\int_0^tf_n(X_s)dV_s(\omega)| \leq \sup_x|f_n(x)||V|_t(\omega)
\]
where $|V|_t(\omega)$ is the total variation of $V$ in $[0,t]$.
\begin{definition}
$\mathbf{a})$ Let $[f_n] \in \mathcal{H}$ and $X$ be a random
variable. We define the expectation of $[f_n](X)$, denoted by
$\mathbb{E}([f_n](X))$, to be $[\mathbb{E}(f_n(X))]$.

$\mathbf{b})$ Let $[f_n] \in \mathcal{H}$, $X$ be a continuous
jointly measurable process and $V$ be a finite variation process
such that $|V|_t$ is integrable. We define the expectation of
$\int_0^t[f_n](X_s)dV_s$, denoted by
$\mathbb{E}(\int_0^t[f_n](X_s)dV_s)$, to be $[
\mathbb{E}(\int_0^tf_n(X_s)dV_s)]$.
\end{definition}
It is easy to check that the above notion is  well
defined. We also observe that the natural definition of expectation
doesn't work. In fact, let $Y_n: \Omega \rightarrow \mathbb{R}$ be
random variables such that $(Y_n(\omega)) \in \mathbf{s}^{\prime}$
for all $\omega \in \Omega$. We have that $[(\mathbb{E}(Y_n))]$ is
dependent of the representatives $(Y_n)$, because if $\{ A_n: n
\in \mathbb{N}\}$ is a partition measurable of $\Omega$ such that
$\mathbb{P}(A_n)=\frac{1}{2^n}$ for all $n \in \mathbb{N}$ and
$(b_n) \in \mathbf{s}^{\prime}$,
$[Y_n(\omega)]=[Y_n(\omega)+b_n2^n1_{A_n}(\omega)]$ and
$(\mathbb{E}(b_n2^n1_{A_n})) =(b_n) \notin \mathbf{s}$.

We can now prove the It\^o formula for tempered generalized
functions. Clearly, this formula is an extension of the classical
It\^o formula via infinite dimensional methods.
\begin{theorem}
Let $[f_n] \in \mathcal{H}$ and $X$ be a continuous
semimartingale. Then
\begin{equation}\label{ito}
[f_n](X_t)=[f_n](X_0)+\int_0^tD[f_n](X_s)dX_s +
\frac{1}{2}\int_0^tD^2[f_n](X_s)d<X>_s
\end{equation}
where $\int_0^tD[f_n](X_s)dX_s(\omega)$ defined by
$[\int_0^tDf_n(X_s)dX_s(\omega)]$ is the It\^o integral of
$[f_n](X)$ in relation to $X$ from 0 to $t$.
\end{theorem}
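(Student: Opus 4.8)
The plan is to reduce the statement to a termwise application of the classical Itô formula, exploiting that each representative $f_n$ is a Schwartz function and hence in particular lies in $C^2(\mathbb{R})$. For every fixed $n \in \mathbb{N}_0$ the ordinary Itô formula applies to $f_n$, giving
\[
f_n(X_t)=f_n(X_0)+\int_0^t f_n^{\prime}(X_s)\, dX_s + \frac{1}{2}\int_0^t f_n^{\prime\prime}(X_s)\, d<X>_s .
\]
Since $D[f_n]=[f_n^{\prime}]$ and $D^2[f_n]=[f_n^{\prime\prime}]$ by the definition of the derivative in $\mathcal{H}$, the three integral terms in (\ref{ito}) are precisely the equivalence classes of the sequences $(\int_0^t f_n^{\prime}(X_s)\,dX_s)$ and $(\int_0^t f_n^{\prime\prime}(X_s)\,d<X>_s)$. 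Thus it suffices to establish the above identity termwise and then pass to equivalence classes in $\mathbf{h}$, for each fixed $\omega$ and $t$.

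Before doing so I would check that each term is a genuine element of $\mathbf{h}$, i.e. that the associated sequence lies in $\mathbf{s}^{\prime}$. Because $(f_n)\in\mathcal{H}_{\mathbf{s}^{\prime}}$ and each sup-seminorm $\|\cdot\|_{\alpha,\beta,\infty}$ is dominated by some $\|\cdot\|_m$, there are constants and exponents for which $\sup_x|f_n(x)|$, $\sup_x|f_n^{\prime}(x)|$ and $\sup_x|f_n^{\prime\prime}(x)|$ are all bounded by $C(n+1)^p$. Hence the point values $|f_n(X_r(\omega))|$ and $|\int_0^t f_n^{\prime\prime}(X_s)\,d<X>_s(\omega)|\le \sup_x|f_n^{\prime\prime}(x)|\,<X>_t(\omega)$ are polynomially bounded in $n$, so the corresponding sequences belong to $\mathbf{s}^{\prime}$. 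The Itô integral term is then controlled by rearranging the formula: writing
\[
\int_0^t f_n^{\prime}(X_s)\,dX_s = f_n(X_t)-f_n(X_0)-\frac{1}{2}\int_0^t f_n^{\prime\prime}(X_s)\,d<X>_s,
\]
its modulus is dominated by the polynomially bounded right-hand side, so this sequence also lies in $\mathbf{s}^{\prime}$ and $\int_0^t D[f_n](X_s)\,dX_s\in\mathbf{h}$ is well defined.

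The main obstacle is the pathwise interpretation of the Itô integral term, and this is where I would invoke F\"ollmer's formulation rather than the $L^2$ construction. The classical Itô integral is defined only up to a null set that could a priori depend on $n$, whereas membership in $\mathbf{h}$ requires a genuine sequence for each fixed $\omega$. Fixing a refining sequence of partitions, for almost every $\omega$ the path $s\mapsto X_s(\omega)$ admits a quadratic variation along this sequence, and F\"ollmer's theorem then yields the above identity \emph{simultaneously} for every $C^2$ integrand, the integral being the limit of the associated Riemann sums. This simultaneity is exactly what removes the $n$-dependence: on a single full-measure set the identity holds for all $f_n$ at once, so the two sequences of random variables coincide pathwise there. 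Taking equivalence classes in $\mathbf{h}$ yields (\ref{ito}), and once the pathwise calculus is in place the remaining algebraic passage is immediate.
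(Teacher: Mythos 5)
Your proposal runs on the same engine as the paper's proof: apply the classical It\^o formula to each Schwartz function $f_n$, and use the rearranged identity $\int_0^t Df_n(X_s)\,dX_s = f_n(X_t)-f_n(X_0)-\tfrac{1}{2}\int_0^t D^2f_n(X_s)\,d\langle X\rangle_s$ to control the stochastic integral term by quantities (point values and the $d\langle X\rangle$ integral) that are bounded pathwise via sup-norms. However, there is one step where your argument does not do what the paper's proof does, and it is the step the paper treats as the heart of the matter. You verify only that the sequence $\bigl(\int_0^t Df_n(X_s)\,dX_s(\omega)\bigr)$ lies in $\mathbf{s}^{\prime}$ and from this conclude the term is ``well defined''; but in a quotient algebra well-definedness means independence of the chosen representative of $[f_n]$, since the theorem defines $\int_0^t D[f_n](X_s)\,dX_s$ through one representative. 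The paper's proof is organized precisely around this: it takes an ideal element $(g_n)\in\mathcal{H}_{\mathbf{s}}$ and shows, via the same rearrangement, that $\bigl(\int_0^t Dg_n(X_s)\,dX_s(\omega)\bigr)\in\mathbf{s}$, using that $(g_n)$ and $(D^2g_n)$ are again in $\mathcal{H}_{\mathbf{s}}$, so their point values and $d\langle X\rangle$ integrals are rapidly decreasing in $n$. The repair is immediate---run your own rearrangement with $g_n$ in place of $f_n$ and ``rapidly decreasing'' in place of ``polynomially bounded,'' which is verbatim the paper's argument---but as written this check is absent, and the closing remark that ``the remaining algebraic passage is immediate'' hides exactly the point at issue.

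On the other hand, you make explicit something the paper's proof glosses over: the classical It\^o integral is defined only up to a null set that a priori depends on $n$, whereas an element of $\mathbf{h}$ requires an actual sequence for each fixed $\omega$. Your appeal to F\"ollmer's pathwise calculus (which the paper's introduction advertises but whose proof never actually invokes) is one correct way to obtain the identity simultaneously for all $n$ on a single full-measure set; a more elementary alternative is to note that there are only countably many $n$, so the union of the exceptional null sets is still null. Either way, this point strengthens rather than replaces the paper's argument, and it deserves to be stated.
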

\begin{proof}
We first show that $[\int_0^tDf_n(X_s)dX_s(\omega)]$ is well
defined. In fact, let $(g_n) \in \mathcal{H}_{\mathbf{s}}$. Since
$(Dg_n) \in \mathcal{H}_{\mathbf{s}}$, we have
$(\int_0^tDg_n(X_s)d<X>_s(\omega)) \in \mathbf{s}$. We see that
$(g_n(X_t(\omega)))$ and $(g_n(X_0(\omega)))$ are in $\mathbf{s}$,
which is clear from the definition of point value. Combining these
facts with the It\^o formula we have
\[
(\int_0^tDg_n(X_s)dX_s(\omega))=(g_n(X_t(\omega)))-(g_n(X_0(\omega)))-
(\frac{1}{2}\int_0^tD^2g_n(X_s)d<X>_s(\omega))
\]
are in $\mathbf{s}$ . Finally we see that
$[\int_0^tDf_n(X_s)dX_s(\omega)] \in \mathbf{h}$ and the formula
(\ref{ito}) holds, this is clear from the It\^o formula applied to
$f_n$,
\[
f_n((X_t(\omega)))=
f_n((X_0(\omega)))+\int_0^tDf_n(X_s)dX_s(\omega)+\frac{1}{2}\int_0^tD^2f_n(X_s)d<X>_s(\omega).
\]
\end{proof}
\begin{remark} Let $f \in \mathcal{S}$. By Proposition \ref{includ2},
$[f]=[f_n]$. Then it is clear that
\[
[ f_n](X_t)  =  [f(X_t)]
\]
and
\[
\int_0^tD^2[f_n](X_s)d<X>_s =  [\int_0^tD^2f(X_s)d<X>_s].
\]
Consequently,
\[
\int_0^tD[f_n](X_s)dX_s = [\int_0^tDf(X_s)dX_s].
\]
In particular, the members of the It\^o formula for $f$ as
function are the same that the members of the It\^o formula for
$f$ as tempered generalized function.
\end{remark}
\begin{remark}
We observe that the members of the It\^o formula for $C^4$ functions
with appropriated decreasing at infinite are associated with the
corresponding members of the It\^o formula as tempered generalized
functions. In fact, we have that $(f_n)$ converges uniformly over
compacts whenever $f$ is twice continuously differentiable and
$O(e^{-cx^2})$ for some $c>1$ as $x \rightarrow \infty$ (see
\cite{stone} for more details). In particular,
\[
[f_n](x) \approx [f(x)]
\]
for all $x \in \mathbb{R}$. Thus
\[
[ f_n](X_t)  \approx  [f(X_t)]~~\mbox{ and }~~[ f_n](X_0)  \approx
[f(X_0)].
\]
If $D^2f \in \mathcal{C}^2$ and $D^2f$ is $O(e^{-cx^2})$ for some
$c>1$ as $x \rightarrow \infty$ we have that
\[
\int_0^tD^2[f_n](X_s)d<X>_s \approx [\int_0^tD^2f(X_s)d<X>_s].
\]
Combining the above identities with the classical It\^o formula
for $f$ we conclude that
\[
\int_0^tD[f_n](X_s)dX_s \approx [\int_0^tDf(X_s)dX_s].
\]
\end{remark}
\begin{remark}
We recall that S. Ustunel \cite{ustunel} and B. Rajeev \cite{rajeev}
obtained It\^o formula for tempered distributions, in the context of
stochastic integration in Hilbert spaces. Our formula (\ref{ito}) is
different from the It\^o formulas given in \cite{rajeev} and
\cite{ustunel}, in particular, the formulas in these references do
not extend the classical It\^o formula as ours does.

\end{remark}
\begin{corollary}
Let $X$ be a semimartingale. Then
\begin{equation}\label{itotanaka}
|X_t-a|=
|X_0-a|+\int_0^t\mathrm{sgn}(X_s-a)dX_s+\int_0^t\delta_a(X_s)d<X>_s.
\end{equation}
\end{corollary}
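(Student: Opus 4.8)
The plan is to read (\ref{itotanaka}) as the special case of the generalized It\^o formula (\ref{ito}) in which $[f_n]$ is the canonical image $\iota(g)$ of the function $g(x)=|x-a|$. First I would verify that $g$ really is a tempered distribution: it is locally integrable and grows only linearly, hence $g\in\mathcal{S}^{\prime}$ and, by Proposition \ref{includ}, $\iota(g)=[g_n]\in\mathcal{H}$ is well defined with $g_n=\sum_{j=0}^{n}g(h_j)h_j$.

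Next I would compute the two distributional derivatives of $g$. Integrating by parts against an arbitrary $\varphi\in\mathcal{S}$ gives $Dg=\mathrm{sgn}(\cdot-a)$ and then $D^2g=2\delta_a$; both the bounded function $\mathrm{sgn}(\cdot-a)$ and $\delta_a$ lie in $\mathcal{S}^{\prime}$. By the compatibility of $\iota$ with differentiation, Proposition \ref{includ2}(b), together with linearity of $\iota$, this yields the two identities $D\iota(g)=\iota(\mathrm{sgn}(\cdot-a))$ and $D^{2}\iota(g)=2\,\iota(\delta_a)$ in $\mathcal{H}$.

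I would then substitute $[f_n]=\iota(g)$ into (\ref{ito}). The first-order term becomes $\int_0^t\iota(\mathrm{sgn}(\cdot-a))(X_s)\,dX_s$, and in the second-order term the factor $\frac{1}{2}$ of the It\^o formula combines with the factor $2$ coming from $D^2g=2\delta_a$ to leave exactly $\int_0^t\iota(\delta_a)(X_s)\,d\langle X\rangle_s$; this is precisely how the coefficient $1$ in front of the local-time term of (\ref{itotanaka}) is produced. Recognising the endpoint point values $\iota(g)(X_t)$ and $\iota(g)(X_0)$ as $|X_t-a|$ and $|X_0-a|$ then gives the asserted formula, which thus holds as an identity of tempered numbers in $\mathbf{h}$, the genuinely generalized ingredient being the last term $\int_0^t\delta_a(X_s)\,d\langle X\rangle_s$.

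The main obstacle is the honest identification of the continuous but non-smooth, non-decaying endpoints. I must argue that the generalized point value $\iota(g)(X_t)=[g_n(X_t)]$ coincides with, or is at least associated to, the real number $|X_t-a|$, and similarly that $\int_0^t\iota(\mathrm{sgn}(\cdot-a))(X_s)\,dX_s$ matches the classical It\^o integral $\int_0^t\mathrm{sgn}(X_s-a)\,dX_s$. Since $g$ is not $O(e^{-cx^2})$, the convergence arguments of the preceding remarks do not apply verbatim, so here I would invoke the pointwise convergence of the Hermite partial sums $g_n\to g$ for a continuous function of polynomial growth and the association framework of Definitions \ref{ass1} and \ref{ass2}. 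This step, rather than the purely algebraic reduction to (\ref{ito}), is where the real work lies.
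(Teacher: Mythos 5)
Your proposal is correct and is essentially the paper's own argument: the paper's entire proof is the single line ``apply the It\^o formula (\ref{ito}) to the tempered distribution $|\cdot-a|$,'' and your computation ($D|\cdot-a|=\mathrm{sgn}(\cdot-a)$, $D^{2}|\cdot-a|=2\delta_a$, with the factor $\frac{1}{2}$ in (\ref{ito}) cancelling the $2$) simply makes that line explicit. The endpoint-identification issue you raise at the end is genuine but is passed over in silence by the paper: (\ref{itotanaka}) is to be read as an identity in $\mathbf{h}$, with every term --- including $|X_t-a|$ --- denoting the corresponding generalized object $\iota(|\cdot-a|)(X_t)$, so no pointwise convergence of the Hermite partial sums is needed (or proved) there.
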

\begin{proof}
Applying the It\^o formula (\ref{ito}) to the tempered
distribution $|\cdot -a|$ we obtain (\ref{itotanaka}).

\end{proof}

\begin{corollary}
Let $[f_n] \in \mathcal{H}$ and $B$ be a Brownian motion such that
$B_0=0$. Then
\begin{equation}\label{dynkin}
\mathbb{E}([f_n](B_t+x))=[f_n](x)+
\frac{1}{2}\int_0^t\mathbb{E}(D^2[f_n](B_s+x))ds
\end{equation}
\end{corollary}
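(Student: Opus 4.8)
The plan is to specialize the generalized It\^o formula (\ref{ito}) to the semimartingale $X_t = B_t + x$ and then take expectations componentwise. Since $B_0 = 0$ we have $X_0 = x$, and since $B$ is a Brownian motion we have $dX_s = dB_s$ and $d\langle X\rangle_s = ds$. Substituting into (\ref{ito}) yields, for a representative $(f_n) \in \mathcal{H}_{\mathbf{s}^{\prime}}$,
\[
[f_n](B_t+x) = [f_n](x) + \Big[\int_0^t Df_n(B_s+x)\,dB_s\Big] + \frac{1}{2}\Big[\int_0^t D^2 f_n(B_s+x)\,ds\Big].
\]
I would then apply the expectation introduced in Section 6 to both sides, which acts on representatives by $\mathbb{E}([g_n]) = [\mathbb{E}(g_n)]$, thereby reducing the identity to a family of classical statements indexed by $n$.

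The key observation is that for each fixed $n$ the function $f_n$ lies in $\mathcal{S}$, so $Df_n$ is bounded and the integrand $Df_n(B_s+x)$ is square integrable on $[0,t]$. Hence $\int_0^t Df_n(B_s+x)\,dB_s$ is a genuine $L^2$-martingale with zero mean, and $\mathbb{E}\big(\int_0^t Df_n(B_s+x)\,dB_s\big) = 0$ for every $n$. This annihilates the stochastic integral term exactly, not merely in the sense of association, so the strong equality in $\mathbf{h}$ survives. For the remaining second-order term I would invoke Fubini's theorem, justified again by the boundedness of $D^2 f_n$, to interchange $\mathbb{E}$ and $\int_0^t \,ds$, obtaining $\mathbb{E}\big(\int_0^t D^2 f_n(B_s+x)\,ds\big) = \int_0^t \mathbb{E}(D^2 f_n(B_s+x))\,ds$, which is precisely the component of the claimed right-hand side.

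Before concluding I would verify that every term is a well-defined tempered number, i.e.\ independent of the chosen representative. Because $(f_n) \in \mathcal{H}_{\mathbf{s}^{\prime}}$ and $\mathcal{H}_{\mathbf{s}^{\prime}}$ is stable under $D$ (by the estimate established in the proof of Lemma \ref{lema}), the sequences $(\|f_n\|_m)$ and $(\|D^2 f_n\|_m)$ lie in $\mathbf{s}^{\prime}$; combined with the sup-norm bound $\sup_y|g(y)| \leq C\|g\|_m$ coming from the equivalence of seminorms, this gives $|\mathbb{E}(f_n(B_t+x))| \leq C\|f_n\|_m$ and $|\int_0^t \mathbb{E}(D^2 f_n(B_s+x))\,ds| \leq t\,C\|D^2 f_n\|_m$, so both sequences belong to $\mathbf{s}^{\prime}$ and the corresponding classes lie in $\mathbf{h}$. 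The same bounds with $\mathbf{s}$ in place of $\mathbf{s}^{\prime}$ show that modifying $(f_n)$ by an element of $\mathcal{H}_{\mathbf{s}}$ alters each term only by an element of $\mathbf{s}$, establishing representative-independence.

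The main obstacle, and really the only delicate point, is ensuring that the stochastic integral term contributes exactly zero rather than only up to association; this is what upgrades (\ref{dynkin}) to an equality in $\mathbf{h}$ rather than a mere associated identity. Everything else is a componentwise application of the classical Dynkin formula together with the routine verification that the operations remain inside $\mathbf{s}^{\prime}$ and descend to $\mathbf{h}$.
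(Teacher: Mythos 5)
Your proposal is correct and follows essentially the same route as the paper: specialize the generalized It\^o formula (\ref{ito}) to $X_t = B_t + x$, take expectations componentwise, and kill the stochastic integral term by noting that $\int_0^t Df_n(B_s+x)\,dB_s$ is a true martingale for each $n$ (since $Df_n \in \mathcal{S}$ is bounded), which is exactly the paper's one-line argument. The additional details you supply (the Fubini interchange and the representative-independence checks) are routine verifications the paper leaves implicit.
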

\begin{proof}
We have
\[
\mathbb{E}([\int_0^tD[f_n](B_s+x)dB_s])=[\mathbb{E}\int_0^tDf_n(B_s+x)dB_s]=0,
\]
because $\int Df_n(B_s+x)dB_s$ is a martingale.
\end{proof}
\begin{corollary}
Let $[f_n] \in \mathcal{H}$. Then
$g_t=[\mathbb{E}(f_n(B_t+\cdot))] \in \mathcal{H}_T$ solves the
Cauchy problem
\begin{eqnarray*}
D_tg & = & \frac{1}{2}D_x^2g \\
g_0 & = & [f_n].
\end{eqnarray*}
\end{corollary}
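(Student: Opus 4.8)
The plan is to work with the representative $G_n(t,x)=\mathbb{E}(f_n(B_t+x))$. Since $B$ is a Brownian motion with $B_0=0$, the law of $B_t$ is the centered Gaussian with density $p_t(y)=(2\pi t)^{-1/2}e^{-y^2/(2t)}$ for $t>0$, so that
\[
G_n(t,\cdot)=f_n*p_t,
\]
the heat semigroup applied to $f_n$. As $f_n\in\mathcal{S}$, standard theory gives $G_n(t,\cdot)\in\mathcal{S}$ for every $t$, $G_n(\cdot,x)\in C^1([0,T])$, and the classical identity $\partial_t G_n=\frac{1}{2}\partial_x^2 G_n$ with $G_n(0,\cdot)=f_n$. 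Thus the statement splits into three tasks: (i) show $(G_n)\in\mathcal{H}^T_{\mathbf{s}^{\prime}}$, so that $g_t=[G_n]\in\mathcal{H}_T$ is well defined; (ii) verify the equation in the quotient; and (iii) check the initial datum.

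The heart of the argument is (i), namely the uniform-in-$t$ seminorm bounds with polynomial growth in $n$. I would work with the equivalent family $\|\varphi\|_{\alpha,\beta,\infty}=\sup_x|(1+|x|^2)^{\alpha}D^{\beta}\varphi(x)|$, using $D^{\beta}(f_n*p_t)=(D^{\beta}f_n)*p_t$. The only obstacle is that multiplication by the weight $(1+|x|^2)^{\alpha}$ does not commute with convolution; this is handled by the elementary splitting $(1+|x|^2)^{\alpha}\le 2^{\alpha}(1+|x-y|^2)^{\alpha}(1+|y|^2)^{\alpha}$, which yields
\[
(1+|x|^2)^{\alpha}|((D^{\beta}f_n)*p_t)(x)|\le 2^{\alpha}\|f_n\|_{\alpha,\beta,\infty}\int_{\mathbb{R}}(1+|y|^2)^{\alpha}p_t(y)\,dy.
\]
The remaining integral is a polynomial in $t$ of degree $\alpha$, built from the Gaussian moments $\int|y|^{2k}p_t(y)\,dy=(2k-1)!!\,t^k$, hence bounded on $[0,T]$ by a constant depending only on $\alpha$ and $T$. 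Therefore $\sup_{t\in[0,T]}\|G_n(t,\cdot)\|_{\alpha,\beta,\infty}\le C_{\alpha,T}\|f_n\|_{\alpha,\beta,\infty}$. Since $(f_n)\in\mathcal{H}_{\mathbf{s}^{\prime}}$ forces polynomial-in-$n$ growth of every seminorm $\|f_n\|_{\alpha,\beta,\infty}$ (by equivalence of the two families), the same holds for $\sup_t\|G_n(t,\cdot)\|_m$, giving $(\sup_t\|G_n(t,\cdot)\|_m)\in\mathbf{s}^{\prime}$ for all $m$. Applying the identical estimate to $\partial_t G_n=\frac{1}{2}(D^2 f_n)*p_t$, which only replaces $f_n$ by $D^2 f_n$, controls the time derivative, so $(G_n)\in\mathcal{H}^T_{\mathbf{s}^{\prime}}$. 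Running the same bound with the $\mathbf{s}$-norms in place of $\mathbf{s}^{\prime}$ sends a negligible representative to a negligible element, so $g_t$ is independent of the chosen representative of $[f_n]$.

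Tasks (ii) and (iii) are then immediate at the level of representatives. Because each $G_n$ satisfies the classical heat equation exactly, we have $\partial_t G_n-\frac{1}{2}\partial_x^2 G_n\equiv 0$, so $D_t g=[\partial_t G_n]=\frac{1}{2}[\partial_x^2 G_n]=\frac{1}{2}D_x^2 g$ holds identically in $\mathcal{H}_T$, not merely modulo the ideal. For the initial datum, $B_0=0$ gives $G_n(0,x)=\mathbb{E}(f_n(x))=f_n(x)$, whence $g_0=[f_n]$. The main difficulty is thus concentrated entirely in the weighted convolution estimate of step (i); once the splitting of the polynomial weight and the boundedness of the Gaussian moments on $[0,T]$ are in place, the verification of the equation and of the initial condition is formal.
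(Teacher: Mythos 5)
Your proof is correct, and at the decisive step it takes a genuinely different route from the paper's. Both arguments begin with the same identification $\mathbb{E}(f_n(B_t+x))=(f_n\ast p_t)(x)$, but from there they diverge. The paper verifies the equation \emph{probabilistically}: it invokes its preceding Corollary, the generalized Dynkin formula (\ref{dynkin}) (itself a consequence of the generalized It\^o formula (\ref{ito}) and the martingale property of $\int_0^t Df_n(B_s+x)\,dB_s$), so that $g_n(t,x)=f_n(x)+\frac{1}{2}\int_0^t\mathbb{E}(D^2f_n(B_s+x))\,ds$ and the heat equation follows by differentiating in $t$. You instead verify the equation \emph{analytically}: each representative $G_n=f_n\ast p_t$ satisfies the classical heat equation exactly, so the identity $D_tg=\frac{1}{2}D_x^2g$ holds at the level of representatives, with no stochastic input beyond the Gaussian law of $B_t$; in particular you make no use of the It\^o machinery that is the theme of the section. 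On the membership question, the paper is terse: it appeals only to continuity of convolution in $\mathcal{S}$ (and its displayed limit ``$\lim_{n\rightarrow\infty}f_n\ast p_t=f_n$'' is evidently meant to be a limit as $t\rightarrow 0$), whereas you actually prove the required bound $(\sup_{t\in[0,T]}\|G_n(t,\cdot)\|_m)\in\mathbf{s}^{\prime}$ via the Peetre-type splitting $(1+|x|^2)^{\alpha}\le 2^{\alpha}(1+|x-y|^2)^{\alpha}(1+|y|^2)^{\alpha}$ and the boundedness of Gaussian moments on $[0,T]$, handle the time derivative by the same estimate applied to $D^2f_n$, and check independence of the representative --- a point the paper leaves implicit but which is needed for $g_t$ to be well defined as a function of $[f_n]$. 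In short: the paper's route is shorter and exhibits the result as a corollary of its generalized It\^o/Dynkin theory, while yours is self-contained, purely deterministic once the law of $B_t$ is identified, and supplies the quantitative seminorm estimates that the paper glosses over.
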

\begin{proof}
We observe that
\[
\mathbb{E}(f_n(B_t+x))=\int_{\mathbb{R}}f_n(y)p_t(x-y)~dy=f_n \ast
p_t(x)
\]
where $p_t(y)= \frac{1}{\sqrt{2 \pi t}}e^{-\frac{y^2}{2t}}$ is the
heat kernel. As $\ast$ is a continuous operation in $\mathcal{S}$
and $\lim_{n \rightarrow \infty}f_n \ast p_t=f_n$ in $\mathcal{S}$
we conclude that $[g_n] \in \mathcal{H}_T$ where the functions
$g_n:[0,T]\times\mathbb{R} \rightarrow \mathbb{R}$ are given by
$g_n(t,x)=\mathbb{E}(f_n(B_t+x))$. Applying the formula
(\ref{dynkin}) we completes the proof.
\end{proof}
\begin{theorem}
Let $[f_n] \in \mathcal{H}_T$ and $X$ be a continuous
semimartingale. Then
\begin{eqnarray*}
[f_n](X_t) & = &
[f_n](X_0)+\int_0^tD_t[f_n](s,X_s)ds+\int_0^tD_x[f_n](s,X_s)dX_s
\\& & + \frac{1}{2}\int_0^tD_x^2[f_n](s,X_s)d<X>_s.
\end{eqnarray*}
\end{theorem}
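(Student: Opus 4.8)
The plan is to reduce everything to the time-independent It\^o formula (\ref{ito}) already established, running the same argument but with the additional $ds$ drift term to absorb. First I would fix a representative $(f_n) \in \mathcal{H}^T_{\mathbf{s}^{\prime}}$ and a sample point $\omega \in \Omega$. Since each $f_n$ lies in $\mathcal{S}_T$, it is $C^1$ in $t$ and $C^\infty$ (in particular $C^2$) in $x$, so the classical time-dependent It\^o formula applies pathwise to each $f_n$:
\[
f_n(t, X_t) = f_n(0, X_0) + \int_0^t D_t f_n(s, X_s)\, ds + \int_0^t D_x f_n(s, X_s)\, dX_s + \frac{1}{2}\int_0^t D_x^2 f_n(s, X_s)\, d\langle X\rangle_s .
\]
It then remains to check that each of the four terms defines an element of $\mathbf{h}$ and, crucially, that the whole identity is independent of the chosen representative, i.e. descends to a statement in $\mathbf{h}$.

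As in the proof of (\ref{ito}), well-definedness reduces to showing that when $(g_n) \in \mathcal{H}^T_{\mathbf{s}}$ every term above yields a sequence in $\mathbf{s}$. For the boundary and the two Lebesgue-type integrals this is a direct estimate. The point-value bound $|g_n(s,x)| = |\delta_x(g_n(s,\cdot))| \le C\|g_n(s,\cdot)\|_m \le C \sup_{u}\|g_n(u,\cdot)\|_m$ together with $(\sup_u \|g_n(u,\cdot)\|_m) \in \mathbf{s}$ shows $(g_n(t,X_t(\omega)))$ and $(g_n(0,X_0(\omega)))$ lie in $\mathbf{s}$. The drift term is controlled by $|\int_0^t D_t g_n(s,X_s)\,ds| \le t\, \sup_{u,x}|\partial_t g_n(u,x)|$, which is dominated by an $\mathbf{s}$-sequence because $(\sup_u \|\partial_t g_n(u,\cdot)\|_m) \in \mathbf{s}$. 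Likewise $|\int_0^t D_x^2 g_n(s,X_s)\,d\langle X\rangle_s| \le \langle X\rangle_t(\omega)\, \sup_{u,x}|D_x^2 g_n(u,x)|$, and $\sup_x |D_x^2 g_n(u,x)|$ is bounded by a fixed Schwartz seminorm of $g_n(u,\cdot)$ via the equivalence of the two families of seminorms on $\mathcal{S}$ (exactly as used in the proof of Lemma \ref{lema}); hence this term also gives a sequence in $\mathbf{s}$.

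The only term not amenable to a direct pathwise estimate is the stochastic integral $\int_0^t D_x g_n(s,X_s)\,dX_s$, since the It\^o integral is not a pathwise operation. The resolution, identical to the device used for (\ref{ito}), is to solve for it algebraically rather than bound it: rearranging the classical formula gives
\[
\int_0^t D_x g_n(s,X_s)\,dX_s = g_n(t,X_t) - g_n(0,X_0) - \int_0^t D_t g_n(s,X_s)\,ds - \frac{1}{2}\int_0^t D_x^2 g_n(s,X_s)\,d\langle X\rangle_s ,
\]
and since each term on the right has just been shown to lie in $\mathbf{s}$, so does the left-hand side. This simultaneously proves that $[\int_0^t D_x f_n(s,X_s)\,dX_s]$ is a well-defined element of $\mathbf{h}$ and that the entire It\^o formula passes to the quotient $\mathcal{H}_T$.

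I expect the main obstacle to be precisely this well-definedness of the stochastic integral term, for the reason just noted; everything else is a matter of collecting sup-norm estimates and invoking the membership conditions defining $\mathcal{H}^T_{\mathbf{s}}$ and $\mathcal{H}^T_{\mathbf{s}^{\prime}}$. A minor technical point to dispatch is that $D_t f_n(s,X_s)$, $D_x f_n(s,X_s)$ and $D_x^2 f_n(s,X_s)$ are adapted and sufficiently regular for the classical It\^o formula and the stochastic integral to make sense, which follows from the joint regularity built into $\mathcal{S}_T$ and the continuity of the semimartingale $X$.
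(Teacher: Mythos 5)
Your proposal is correct and takes essentially the same route as the paper: the paper's proof merely observes that the drift term $\int_0^t D_t[f_n](s,X_s)\,ds$ is well defined and then ``proceeds analogously'' to the proof of the time-independent formula (\ref{ito}), which is exactly what you execute --- direct sup-norm estimates on the boundary, drift and quadratic-variation terms for elements of the ideal $\mathcal{H}^T_{\mathbf{s}}$, followed by solving algebraically for the stochastic integral via the classical It\^o formula, since that term admits no pathwise bound. Your write-up simply fills in the details that the paper leaves implicit in its one-line appeal to the earlier argument.
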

\begin{proof}
We observe that $\int_0^tD_t[f_n](s,X_s)ds$ is well defined and
proceed analogously to the proof of the extension of It\^o
formula.
\end{proof}


\begin{thebibliography}{9999}


\bibitem{albe} S. Albeverio, Z. Haba, F. Russo,  {\it A two-space
dimensional semilinear heat equation perturbed by (Gaussian) white
noise}, Probab. Theory Related Fields 121(2001) 319-366.

\bibitem{Miku}
P. Antosik, J. Mikusi\'nski, R. Sikorski, {\it Theory of
Distributions. The Sequential Approach}, Elsevier Scientific
Publishing Company, 1973.

\bibitem{Biag}
H. Biagioni, {\it A Nonlinear Theory of Generalized functions},
Lectures Notes in Mathematics, Springer-Verlag, Berlin, 1421 (1990).

\bibitem{capa}
U. \c{C}apar, H. Aktu\u{g}lu, {\it An overall view of stochastics in
Colombeau related algebras}, in : L.  Decreusefond, B. Oksendal, S.
Ustünel(Edis.),  Stochastic analysis and related topics VIII, Progr.
Probab, 2003,  67-90.



\bibitem{colb}
J. Colombeau, {\it Elementary Introduction to New generalized
Functions}, Noth-Holland Math Studies 113, Noth Holland, Amsterdam,
1985.

\bibitem{calo}
P. Catuogno, S. Molina, C. Olivera, {\it On Hermite representation
of distributions and products},   Integral Transforms and Special
Functions. 18 (2007) 233-243.



\bibitem{Del}
A. Delcroix {\it Regular rapidly decreasing nonlinear generalized
  functions. Application to microlocal regularity.}, J. Math. Anal. Appl. 327 (2007) 564-584.


\bibitem{Fol}
H. F\"{o}llmer, {\it Calcul $d^\prime$ It\^o sans probabilit\'es.},
Lecture Notes in Math. {\bf 850} (1981) 143-150.

\bibitem{gkos}
M. Grosser, M. Kunzinger, M. Oberguggenberger, R. Steinbauer, {\it
Geometric theory of generalized function with applications to
general relativity.},  Kluwer Academic Publishers, Dordrecht, 2001.


\bibitem{mar2} C. Martias, {\it Extension du calcul d'It\^o aux
distributions de Schwartz}, C. R. Acad. Sci. Paris Sér. I Math. 321
(1995) 331-334.

\bibitem{mar1} C.  Martias, {\it Stochastic integration on generalized function
spaces and its applications}, Stochastics Stochastics Rep.  57
(1996) 289-301.

\bibitem{mi-se} R. Mirkova, S. Pilipovich, D. Sele\u{s}i, {\it Generalized stochastic processes in algebras of generalized
functions}, J. Math. Anal. Appl. 353 (2008)  260-270.


\bibitem{ober2} M. Oberguggenberger, {\it Multiplication of Distributions and
Applications to Partial Differential Equations}, Pitman Research
Notes in Math. Series 259. Longman, Harlow, 1993.


\bibitem{ober4} M. Oberguggenberger, F.  Russo,{\it Nonlinear SPDEs:
Colombeau solutions and pathwise limits}, in : L. Decreusefond
(Edi),  Stochastic analysis and related topics, VI,  Progr. Probab.,
1998 319-332.



\bibitem{Protter} Ph. Protter,  {\it Stochastic integration and differential
equations.}, Second edition. Stochastic Modelling and Applied
Probability, 21. Springer-Verlag, Berlin, 2005.

\bibitem{radyno} Y. Radyno, N. Tkhan, S.  Ramadan,{\it The Fourier transformation in an algebra
of new generalized functions}, Acad. Sci. Dokl. Math. 46 (1993)
414-417.

\bibitem{rajeev} B. Rajeev,  {\it From Tanaka's formula to Ito's formula: distributions,
tensor products and local times},  Lectures Notes in Mathematics,
Springer-Verlag, Berlin, 1775 (2001) 371-389.

\bibitem{Red1} M. Reed, B. Simon, {\it Methods of Modern Mathematical
Physics. vol. 1}, Academic Press, 1980.


\bibitem{Red2} M. Reed, B. Simon, {\it Methods of Modern Mathematical
Physics. vol. 2}, Academic Press, 1975.

\bibitem{Russo}
F. Russo,  {\it Colombeau generalized functions and stochastic
analysis}, Edit. A.l. Cardoso, M. de  Faria, J Potthoff, R. Seneor,
L. Streit, Stochastic analysis and applications in physics , NATO
Adv. Sci. Inst. Ser. C Math. Phys. Sci., Kluwer Acad. Publ.,
Dordrecht, 449(1994) 329-249.



\bibitem{Sad1} S. Sadlok, {\it On Hermite expansion of $x_{+}^{p}$}, Annales
Polonici Mathematici. XXXVIII (1980) 159-162.


\bibitem{Schw} L.  Schwartz, {\it Th\'eorie des distributions}, Hermann, Paris,
1966.

\bibitem{Red3} B. Simon, {\it Distributions and Their Hermite
Expansions}, Journal of Math. Phys. 121 (1971) 140-148.



\bibitem{Shen-Sun} C. Shen, M. Sun, {\it On the neutral Hermite product of the
distributions $x\sp {±n}$ and $\delta\sp {(m)}(x)$}, Integral
Transforms Spec. Funct. 19 (2008) 387-398.

\bibitem{stone} M. Stone, {\it Developments in Hermite polynomials}, Ann. of Math.  29 (1927/1928) 1-13.

\bibitem{ustunel} S. \"{U}stunel, {\it A generalization of It\^o
formula}, Journal of Funct. Anal. 47 (1982) 143-152.
\end{thebibliography}
\end{document}